\documentclass[pdftex]{amsart}
\usepackage{amsmath,amsthm,amssymb}
\usepackage{tikz}
\usetikzlibrary{arrows.meta,calc,decorations.markings,fit,math,cd}
\usepackage[skins,raster,breakable]{tcolorbox}
\usepackage{mathtools}
\mathtoolsset{showonlyrefs}
\usepackage[style=alphabetic,backend=biber]{biblatex}
\numberwithin{equation}{section}

\renewcommand{\P}{\mathbb{P}}
\newcommand{\Q}{\mathbb{Q}}
\newcommand{\R}{\mathbb{R}}
\newcommand{\can}{\hat{h}_\phi}

\newtheorem{lem}{Lemma}[section]

\newtheorem{thm}[lem]{Theorem}
\theoremstyle{definition}
\newtheorem{rem}[lem]{Remark}
\newtheorem{dfn}[lem]{Definition}

\title[Height Gaps for Polynomial Maps]{On the Canonical Height Gap for Polynomial Maps and Portraits of Preperiodic Points of Polynomials with Height $0$}
\author{Haruki Imamura}
\date{\today}

\begin{document}
	
	\begin{abstract}
		We study the difference between the canonical height and the naive height for polynomial maps on $\mathbb{P}^{1}$. While explicit upper bounds on this height gap generally depend on the degree $d$ for rational maps, we establish a refined bound for polynomial maps that is essentially independent of $d$. As an application, we determine the set of rational preperiodic points for polynomial maps defined over $\mathbb{Q}$ of height $0$ and classify their realizable portraits.
	\end{abstract}
	
	\maketitle
	\setcounter{tocdepth}{1}
	\tableofcontents
	
	\section{Introduction}
	
	For a non-isomorphic surjective morphism $\phi: \mathbb{P}^1 \to \mathbb{P}^1$ of projective line defined over a number field, the canonical height $\hat{h}_\phi$ is a height function associated with $\phi$, reflecting the dynamical properties of the map. It is classically known that the difference between the naive height $h$, defined in Definition \ref{naive_height}, and the canonical height $\hat{h}_\phi$ is bounded. This difference plays a crucial role in understanding the distribution of preperiodic points and other arithmetic properties.
	In this paper, we focus on polynomial maps on $\mathbb{P}^1$ and establish a refined upper bound on the height gap, improving upon the result of \cite{yap}. As an application of this bound, we determine the set of rational preperiodic points for polynomial maps of height 0 and classify their realizable portraits.
	
	Let us first recall the definition of the canonical height.
	
	\begin{dfn}[Canonical Height]
		Let $\phi \colon \P^1 \to \P^1$ be a rational map of degree $d \geq 2$ defined over a number field $K$. We define the \emph{canonical height} $\hat{h}_\phi \colon \P^1(\overline{\Q}) \to \R$ as
		\begin{equation}
			\can(P) \coloneq \lim_{n \to \infty} \frac{h\big( \phi^n(P) \big)}{d^n}.
		\end{equation}
		This limit exists (see, for example, \cite[Theorem 3.20]{sil}). Here, $h$ is the naive height function on $\mathbb{P}^1$ (see Definition \ref{naive_height}).
	\end{dfn}
	
	For rational maps, we define their height as follows:
	
	\begin{dfn}
		Let $\phi=(f_0:f_1): \mathbb{P}^1 \to \mathbb{P}^1$ be a rational map defined over a number field $K$ of degree $d \geq 2$. Let each $f_i$ be written as
		\begin{align*}
			f_i(X_0, X_1) = \sum_{j=0}^d a_{ij} X_0^{d-j} X_1^{j}.
		\end{align*}
		Then we define the height $h(\phi)$ of $\phi$ to be the height of the point whose coordinates are all the coefficients $a_{ij}$ of the polynomials $f_0$ and $f_1$.
		Note that this definition is independent of the presentation of each $f_i$ because $h$ does not depend on homogeneous coordinates.
	\end{dfn}
	
	In this paper, we say that a rational map $\phi \colon \mathbb{P}^1 \to \mathbb{P}^1$ of degree $d \geq 2$ is a \emph{polynomial map} if we can take $f_1(X_0, X_1) = X_1^d$. The following is our main theorem:
	
	\begin{thm}
		\label{main thm}
		Let $\phi \colon \P^1 \to \P^1$ be a polynomial map defined over a number field $K$ of degree $d \geq 2$. Then
		\begin{equation}
			|\hat{h}_\phi(P) - h(P)| \leq \frac{d}{d - 1} \left( \log2 + \frac{\log d}{d} + h(\phi) \right)
		\end{equation}
		for all $P \in \P^1(\overline{K})$.
	\end{thm}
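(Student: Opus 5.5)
We follow the standard telescoping strategy: first bound the one-step gap
\[
\sup_{P}\bigl|h(\phi(P)) - d\,h(P)\bigr| \le C,
\]
and then sum a geometric series. Writing $\hat h_\phi(P) - h(P) = \sum_{n\ge 0} d^{-(n+1)}\bigl(h(\phi^{n+1}P) - d\,h(\phi^n P)\bigr)$, we get $|\hat h_\phi(P)-h(P)| \le C\sum_{n\ge1} d^{-n} = C/(d-1)$. The target bound is $\frac{d}{d-1}(\log 2 + \frac{\log d}{d} + h(\phi))$, so it suffices to prove
\[
|h(\phi(P)) - d\,h(P)| \le d\log 2 + \log d + d\,h(\phi),
\]
or slightly better where possible. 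We work place by place. Take $P=(x:1)$ with $x\in\overline K$, or the point at infinity, which is fixed and has height $0$ on both sides. Write $\phi(x)=\bigl(f(x):1\bigr)$ with $f(x)=\sum_{j=0}^d a_j x^j$, normalized as $(f_0:f_1)=(f(X_0,X_1):a X_1^d)$. Here $h(\phi)$ is the height of the point $(a_0:\dots:a_d:a)$, where the coefficient of $X_1^d$ in $f_1$ is kept as an extra coordinate. For each place $v$ put $\|\phi\|_v=\max(|a_j|_v,|a|_v)$.

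\textbf{Upper bound.} At each $v$ we have
\[
\max\bigl(|f(x)|_v,|a|_v\bigr) \le \epsilon_v(d+1)\,\|\phi\|_v \max(|x|_v,1)^d,
\]
where $\epsilon_v$ is the archimedean correction, equal to $1$ at nonarchimedean places. Summing over places gives $h(\phi P) \le d\,h(P) + h(\phi) + \log(d+1)$. This is already within the allowed constant, since $\log(d+1) \le d\log 2 + \log d$.

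\textbf{Lower bound (the key step).} Resultants would give a constant depending badly on $d$, so we avoid them and instead use the polynomial structure. Fix $v$ and compare $|f(x)|_v$ with the leading term $|a_d|_v|x|_v^d$ when $|x|_v$ is large. If
\[
|x|_v \ge R_v := 2\max_{j<d}\bigl(|a_j|_v/|a_d|_v\bigr)^{1/(d-j)},
\]
or some variant with constant $2$, then a geometric series bound on the lower-order terms gives $|f(x)|_v \ge \tfrac12|a_d|_v|x|_v^d$ archimedeanly, and equality of absolute values non-archimedeanly. If $|x|_v < R_v$, then the trivial bound $\max(|x|_v,1)^d \le \max(R_v,1)^d$ holds. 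Either way,
\[
\max(|f(x)|_v,|a|_v) \ge c_v\,\max(|x|_v,1)^d \cdot \frac{\min(|a|_v,|a_d|_v,\dots)}{\|\phi\|_v^{\,d}}\cdots
\]
The bookkeeping must be arranged so that, after summing over places with the product formula applied to $a_d$ and $a$, the total loss is $d\log 2 + \log d + d\,h(\phi)$ rather than something of size $d^2 h(\phi)$. The exponents $1/(d-j)$ in $R_v$ are exactly what keep $\log R_v^d$ linear in $d\,h(\phi)$. The $\log d$ term should come from an archimedean estimate of the form $\sum_{k\ge1}2^{-k}\le 1$, sharpened by a $1/d$ factor.

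We expect this lower-bound step to be the main obstacle: choosing $R_v$ and splitting the places so that the constants come out exactly as $d\log 2+\log d$. Our first attempt would be to make the place-by-place case split, then sum using $\sum_v n_v\log\max_j|a_j/a_d|_v^{1/(d-j)} \le h(\phi)$ up to normalization. Finally, we combine the upper and lower bounds into the one-step estimate and telescope as above.
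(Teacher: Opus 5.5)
Your telescoping step and your upper bound are correct and agree with the paper, which takes the upper bound from Lemma \ref{upper bound}. The lower bound, however, is the only nontrivial part of the theorem, and it is not proved. The ``Either way'' display trails off into $\cdots$, and the one concrete estimate you state is false. With $R_v=2\max_{j<d}(|a_j|_v/|a_d|_v)^{1/(d-j)}$ and $|x|_v\ge R_v$ at an archimedean place, you only get $|a_j|_v|x|_v^j\le 2^{-(d-j)}|a_d|_v|x|_v^d$. So the lower-order terms are bounded by $(1-2^{-d})|a_d|_v|x|_v^d$, which gives $|f(x)|_v\ge 2^{-d}|a_d|_v|x|_v^d$, not $\tfrac12|a_d|_v|x|_v^d$. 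This is sharp: take $a_j=-a_d r^{d-j}$ and $x=2r$, so that $f(x)=a_d r^d$. If you raise the constant to $3$ to get $\tfrac12$, the small-$|x|_v$ case costs $d\log 3>d\log 2+\log d$ and overshoots the target. The way you suggest combining the cases, with a factor like $\min(|a|_v,|a_d|_v,\dots)$, also fails. Taking $a=1$, the product over places of $\min(1,|a_d|_v)^{n_v}$ is $H_L(a_d)^{-1}$, not $1$. You would therefore pick up an extra $h(a_d)$ and end with $(d+1)h(\phi)$ rather than $d\,h(\phi)$.

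The missing idea is to bound both cases by a single quantity in which $|a_d|_v$ enters only through $t_v:=\|\phi\|_v/|a_d|_v\ge 1$, after normalizing $a=1$. Each ratio $|a_j|_v/|a_d|_v$ is at most $t_v$. Hence $R_v\le 2t_v$ at archimedean places. At non-archimedean places use constant $1$ and the strict split $|x|_v>R_v$ versus $|x|_v\le R_v$; then $R_v\le t_v$. Moreover $|a_d|_v\ge t_v^{-1}\ge t_v^{-d}$. So in both cases
\[
\max\bigl(|f(x)|_v,1\bigr)\ \ge\ c_v\,t_v^{-d}\max(|x|_v,1)^d,\qquad c_v=2^{-d}\ \text{if } v\mid\infty,\quad c_v=1\ \text{otherwise}.
\]
The product formula then cancels $|a_d|_v$ exactly and gives $h(\phi(P))-d\,h(P)\ge-(d\log 2+d\,h(\phi))$. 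With your upper bound this yields $|\hat{h}_\phi(P)-h(P)|\le\frac{d}{d-1}(\log 2+h(\phi))$, slightly stronger than the theorem. The exponents $1/(d-j)$ play no role here ($R_v=2t_v$ works equally well), and no $\log d$ term arises.

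For comparison, the paper does use resultants. It writes $X_i^{2d-1}=g_{i0}f_0+g_{i1}X_1^d$ and bounds the $g_{ij}$ by Cramer's rule. Because the Sylvester matrix of $f_0$ and $X_1^d$ is banded, every relevant minor has at most $2^{d-1}$ monomials with coefficients $\pm1$ (Lemma \ref{trianglish_matrix}). So resultants do not ``depend badly on $d$'' here. The paper's $\log d$ comes from evaluating the degree-$(d-1)$ forms $g_{ij}$ at $P$. Your Cauchy-bound route is viable and more elementary, but as submitted its key inequality is missing.
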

	
	Yap refined in \cite{yap} the results of \cite{hutz} and \cite{ingram} by working on $\mathbb{P}^1$. Theorem \ref{main thm} can be viewed as a further refinement of \cite[Proposition 3.2]{yap} for the specific case of polynomial maps. Crucially, while the bound for general rational maps on $\mathbb{P}^1$ in \cite[Proposition 3.2]{yap} is of the form $O(d h(\phi) + d \log d)$, our result for polynomial maps gives a bound of the form $O(h(\phi) + 1)$.
	
	We apply Theorem \ref{main thm} to classify preperiodic points of polynomials of height $0$.
	
	\begin{thm}
		\label{-2 to 2}
		Let $\phi$ be a polynomial map of height $0$ defined over $\mathbb{Q}$. If a rational number $\alpha$ is preperiodic for $\phi$, then $\alpha$ is either $0$, $\pm 1$, or $\pm 2$.
	\end{thm}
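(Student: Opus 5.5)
Since $\alpha$ is preperiodic, the sequence $h(\phi^n(\alpha))$ is bounded, so $\hat{h}_\phi(\alpha)=0$. Theorem \ref{main thm} with $h(\phi)=0$ then gives
\begin{equation}
h(\alpha) \le \frac{d}{d-1}\left(\log 2 + \frac{\log d}{d}\right) = \frac{d\log 2 + \log d}{d-1}.
\end{equation}
This bound decreases toward $\log 2$ as $d$ grows, but for small $d$ (e.g.\ $d=2$ it equals $2\log 2+\log 2=\log 8$) it is too weak on its own. So I would combine it with the integrality and non-archimedean structure forced by height $0$.

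\textbf{Step 1 (integrality).} A polynomial map over $\mathbb{Q}$ of height $0$ has $\phi=(f_0:X_1^d)$ with coefficient vector of height $0$. After scaling by the leading coefficient of $X_1^d$, all coefficients are integers with gcd $1$ and each lies in $\{0,\pm1\}$; since $X_1^d$ has coefficient $1$, the affine polynomial $f(x)=\sum a_j x^j$ has $a_j\in\{0,\pm1\}$ and $a_d=\pm1$. Such $f$ is monic up to sign with integer coefficients. Hence at every prime $p$, if $|\alpha|_p>1$ then $|f(\alpha)|_p=|\alpha|_p^d$, and the orbit escapes $p$-adically. So a preperiodic rational $\alpha$ must be an integer.

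\textbf{Step 2 (archimedean escape).} For an integer $\alpha$ with $|\alpha|\ge 3$ I would show $|f(\alpha)|\ge|\alpha|^d-\sum_{j<d}|\alpha|^j>|\alpha|$. Indeed $|\alpha|^d-\frac{|\alpha|^d-1}{|\alpha|-1}\ge |\alpha|^d-\frac{|\alpha|^d-1}{2}>|\alpha|$ for $|\alpha|\ge3$, $d\ge2$. By induction $|f^n(\alpha)|$ is strictly increasing and stays $\ge3$, so the orbit is infinite, a contradiction. This leaves $\alpha\in\{0,\pm1,\pm2\}$. The main theorem's bound can also be used here: $h(\alpha)\le\log 8$ already restricts to $|\alpha|\le 8$, and the elementary estimate handles the rest.

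\textbf{Main obstacle.} The potential snag is the boundary case $|\alpha|=3$, where the geometric-sum estimate must remain strict. I checked above that it does: $(3^d+1)/2>3$ for $d\ge2$. The other step needing care is making sure the normalization really makes $f$ integral with leading coefficient $\pm1$, which holds because height $0$ over $\mathbb{Q}$ forces all nonzero coefficients to share an absolute value. The point at infinity is excluded automatically since $\alpha$ is a rational number.
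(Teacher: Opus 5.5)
Your proof is correct, but it takes a genuinely different route from the paper: the argument you actually complete never uses Theorem~\ref{main thm}, so your opening appeal to the canonical height is redundant.

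\textbf{The paper's route.} It uses the height-gap bound with $h(\phi)=0$ to get $H(\alpha)\le 8$ for $d=2$ and $H(\alpha)\le 4$ for $d\ge 3$. It proves integrality through Lemma~\ref{sanosan}, using that $\alpha$ is a root of the integer polynomial $\phi^{n+m}(X)-\phi^n(X)$ with leading coefficient $\pm1$. It then checks, case by case ($d=2$, $d\ge 3$, and the parity of $d$ for $\alpha\le -3$), that for the remaining integers with $|\alpha|\ge 3$ the single image $\phi(\alpha)$ already falls outside the admissible finite range.

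\textbf{Your route.} Your Step~1 is the $p$-adic counterpart of Lemma~\ref{sanosan} and is equivalent to it. Your Step~2 replaces the finite search by the escape estimate
\[
|f(x)| \ \ge\ \frac{|x|^d+1}{2} \ >\ |x| \qquad \text{for } |x|\ge 3,
\]
iterated to show the orbit is infinite. This estimate is uniform in $d$ and in the sign of the leading coefficient, so no parity cases are needed. It does not even need integrality of $x$; integrality is used only to exclude non-integral rationals of small absolute value.

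\textbf{Comparison.} The paper's route is built to showcase Theorem~\ref{main thm}: the height-gap bound produces an a priori finite candidate set, and only a one-step check remains, which is the intended application. Your route gives a shorter, fully elementary and self-contained proof. It also shows that Theorem~\ref{-2 to 2} does not really depend on the refined height bound: the archimedean and non-archimedean escape estimates, which are in effect local versions of that bound, suffice on their own.
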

	
	We also determine which portraits can be realized.
	
	\begin{thm}
		\label{portrait thm}
		Let $\phi$ be a polynomial map of height $0$ defined over $\mathbb{Q}$.
		The portrait of preperiodic points for $\phi$ is one of those listed below, where we exclude the point at infinity. Conversely, each portrait in the list is realized by some such map.
		\begin{tcbitemize}[raster columns=2, raster equal height,halign=center,valign=center,sharp corners,size=small,colframe=black,colback=white,colbacktitle=red!50!white,raster column skip=0pt,raster row skip=0pt]
			\tcbitem \begin{tikzpicture}[scale=.9]
				\tikzset{vertex/.style = {}}
				\tikzset{every loop/.style={min distance=10mm,in=45,out=-45,->}}
				\tikzset{edge/.style={decoration={markings,mark=at position 1 with %
							{\arrow[scale=1.5,>=stealth]{>}}},postaction={decorate}}}
				\node[vertex] (1) at (0, 0) {$\bullet$};
				\draw[-{Latex[length=1.5mm,width=2mm]}] (1) to[out=310, in=50, looseness=7] (1);
			\end{tikzpicture}
			\tcbitem \begin{tikzpicture}[scale=.9]
				\tikzset{vertex/.style = {}}
				\tikzset{every loop/.style={min distance=10mm,in=45,out=-45,->}}
				\tikzset{edge/.style={decoration={markings,mark=at position 1 with %
							{\arrow[scale=1.5,>=stealth]{>}}},postaction={decorate}}}
				\node[vertex] (1) at (0, 0) {$\bullet$};
				\node[vertex] (2) at (2, 0) {$\bullet$};
				\draw[-{Latex[length=1.5mm,width=2mm]}] (1) to[out=310, in=50, looseness=7] (1);
				\draw[-{Latex[length=1.5mm,width=2mm]}] (2) to[out=310, in=50, looseness=7] (2);
			\end{tikzpicture}
			\tcbitem \begin{tikzpicture}[scale=.9]
				\tikzset{vertex/.style = {}}
				\tikzset{every loop/.style={min distance=10mm,in=45,out=-45,->}}
				\tikzset{edge/.style={decoration={markings,mark=at position 1 with %
							{\arrow[scale=1.5,>=stealth]{>}}},postaction={decorate}}}
				\node[vertex] (1) at (0, 0) {$\bullet$};
				\node[vertex] (2) at (2, 0) {$\bullet$};
				\draw[-{Latex[length=1.5mm,width=2mm]}] (1) to (2);
				\draw[-{Latex[length=1.5mm,width=2mm]}] (2) to[out=310, in=50, looseness=7] (2);
			\end{tikzpicture}
			\tcbitem \begin{tikzpicture}[scale=.9]
				\tikzset{vertex/.style = {}}
				\tikzset{every loop/.style={min distance=10mm,in=45,out=-45,->}}
				\tikzset{edge/.style={decoration={markings,mark=at position 1 with %
							{\arrow[scale=1.5,>=stealth]{>}}},postaction={decorate}}}
				\node[vertex] (1) at (0, 0) {$\bullet$};
				\node[vertex] (2) at (2, 0) {$\bullet$};
				\draw[-{Latex[length=1.5mm,width=2mm]}] (1) to[bend right=30] (2);
				\draw[-{Latex[length=1.5mm,width=2mm]}] (2) to[bend right=30] (1);
			\end{tikzpicture}
			\tcbitem \begin{tikzpicture}[scale=.9]
				\tikzset{vertex/.style = {}}
				\tikzset{every loop/.style={min distance=10mm,in=45,out=-45,->}}
				\tikzset{edge/.style={decoration={markings,mark=at position 1 with %
							{\arrow[scale=1.5,>=stealth]{>}}},postaction={decorate}}}
				\node[vertex] (1) at (0, 0) {$\bullet$};
				\node[vertex] (2) at (2, 0) {$\bullet$};
				\node[vertex] (3) at (4, 0) {$\bullet$};
				\draw[-{Latex[length=1.5mm,width=2mm]}] (1) to[out=310, in=50, looseness=7] (1);
				\draw[-{Latex[length=1.5mm,width=2mm]}] (2) to[out=310, in=50, looseness=7] (2);
				\draw[-{Latex[length=1.5mm,width=2mm]}] (3) to[out=310, in=50, looseness=7] (3);
			\end{tikzpicture}
			\tcbitem \begin{tikzpicture}[scale=.9]
				\tikzset{vertex/.style = {}}
				\tikzset{every loop/.style={min distance=10mm,in=45,out=-45,->}}
				\tikzset{edge/.style={decoration={markings,mark=at position 1 with %
							{\arrow[scale=1.5,>=stealth]{>}}},postaction={decorate}}}
				\node[vertex] (1) at (0, 0) {$\bullet$};
				\node[vertex] (2) at (2, 0) {$\bullet$};
				\node[vertex] (3) at (4, 0) {$\bullet$};
				\draw[-{Latex[length=1.5mm,width=2mm]}] (1) to[out=310, in=50, looseness=7] (1);
				\draw[-{Latex[length=1.5mm,width=2mm]}] (2) to[bend right=30] (3);
				\draw[-{Latex[length=1.5mm,width=2mm]}] (3) to[bend right=30] (2);
			\end{tikzpicture}
			\tcbitem \begin{tikzpicture}[scale=.9]
				\tikzset{vertex/.style = {}}
				\tikzset{every loop/.style={min distance=10mm,in=45,out=-45,->}}
				\tikzset{edge/.style={decoration={markings,mark=at position 1 with %
							{\arrow[scale=1.5,>=stealth]{>}}},postaction={decorate}}}
				\node[vertex] (1) at (0, 0) {$\bullet$};
				\node[vertex] (2) at (2, 0) {$\bullet$};
				\node[vertex] (3) at (4, 0) {$\bullet$};
				\draw[-{Latex[length=1.5mm,width=2mm]}] (1) to (2);
				\draw[-{Latex[length=1.5mm,width=2mm]}] (2) to[out=310, in=50, looseness=7] (2);
				\draw[-{Latex[length=1.5mm,width=2mm]}] (3) to[out=310, in=50, looseness=7] (3);
			\end{tikzpicture}
			\tcbitem \begin{tikzpicture}[scale=.9]
				\tikzset{vertex/.style = {}}
				\tikzset{every loop/.style={min distance=10mm,in=45,out=-45,->}}
				\tikzset{edge/.style={decoration={markings,mark=at position 1 with %
							{\arrow[scale=1.5,>=stealth]{>}}},postaction={decorate}}}
				\node[vertex] (1) at (0, 0) {$\bullet$};
				\node[vertex] (2) at (2, 0) {$\bullet$};
				\node[vertex] (3) at (4, 0) {$\bullet$};
				\draw[-{Latex[length=1.5mm,width=2mm]}] (1) to (2);
				\draw[-{Latex[length=1.5mm,width=2mm]}] (2) to (3);
				\draw[-{Latex[length=1.5mm,width=2mm]}] (3) to[out=310, in=50, looseness=7] (3);
			\end{tikzpicture}
			\tcbitem \begin{tikzpicture}[scale=.9]
				\tikzset{vertex/.style = {}}
				\tikzset{every loop/.style={min distance=10mm,in=45,out=-45,->}}
				\tikzset{edge/.style={decoration={markings,mark=at position 1 with %
							{\arrow[scale=1.5,>=stealth]{>}}},postaction={decorate}}}
				\node[vertex] (1) at (0, 0) {$\bullet$};
				\node[vertex] (2) at (2, 0) {$\bullet$};
				\node[vertex] (3) at (4, 0) {$\bullet$};
				\draw[-{Latex[length=1.5mm,width=2mm]}] (1) to (2);
				\draw[-{Latex[length=1.5mm,width=2mm]}] (2) to[bend right=30] (3);
				\draw[-{Latex[length=1.5mm,width=2mm]}] (3) to[bend right=30] (2);
			\end{tikzpicture}
			\tcbitem \begin{tikzpicture}[scale=.9]
				\tikzset{vertex/.style = {}}
				\tikzset{every loop/.style={min distance=10mm,in=45,out=-45,->}}
				\tikzset{edge/.style={decoration={markings,mark=at position 1 with %
							{\arrow[scale=1.5,>=stealth]{>}}},postaction={decorate}}}
				\node[vertex] (1) at (2, 0) {$\bullet$};
				\node[vertex] (1-1a) at (.166, .5) {$\bullet$};
				\node[vertex] (1-1b) at (.166, -.5) {$\bullet$};
				\draw[-{Latex[length=1.5mm,width=2mm]}] (1) to[out=310, in=50, looseness=7] (1);
				\draw[-{Latex[length=1.5mm,width=2mm]}] (1-1a) to (1);
				\draw[-{Latex[length=1.5mm,width=2mm]}] (1-1b) to (1);
			\end{tikzpicture}
			\tcbitem \begin{tikzpicture}[scale=.8]
				\tikzset{vertex/.style = {}}
				\tikzset{every loop/.style={min distance=10mm,in=45,out=-45,->}}
				\tikzset{edge/.style={decoration={markings,mark=at position 1 with %
							{\arrow[scale=1.5,>=stealth]{>}}},postaction={decorate}}}
				\node[vertex] (1) at (0, 0) {$\bullet$};
				\node[vertex] (2) at (2, 0) {$\bullet$};
				\node[vertex] (3) at (4, 0) {$\bullet$};
				\node[vertex] (4) at (6, 0) {$\bullet$};
				\draw[-{Latex[length=1.5mm,width=2mm]}] (1) to (2);
				\draw[-{Latex[length=1.5mm,width=2mm]}] (2) to[out=310, in=50, looseness=7] (2);
				\draw[-{Latex[length=1.5mm,width=2mm]}] (3) to (4);
				\draw[-{Latex[length=1.5mm,width=2mm]}] (4) to[out=310, in=50, looseness=7] (4);
			\end{tikzpicture}
			\tcbitem \begin{tikzpicture}[scale=.8]
				\tikzset{vertex/.style = {}}
				\tikzset{every loop/.style={min distance=10mm,in=45,out=-45,->}}
				\tikzset{edge/.style={decoration={markings,mark=at position 1 with %
							{\arrow[scale=1.5,>=stealth]{>}}},postaction={decorate}}}
				\node[vertex] (1) at (0, 0) {$\bullet$};
				\node[vertex] (2) at (2, 0) {$\bullet$};
				\node[vertex] (3) at (4, 0) {$\bullet$};
				\node[vertex] (4) at (6, 0) {$\bullet$};
				\draw[-{Latex[length=1.5mm,width=2mm]}] (1) to (2);
				\draw[-{Latex[length=1.5mm,width=2mm]}] (2) to[out=310, in=50, looseness=7] (2);
				\draw[-{Latex[length=1.5mm,width=2mm]}] (3) to[out=310, in=50, looseness=7] (3);
				\draw[-{Latex[length=1.5mm,width=2mm]}] (4) to[out=310, in=50, looseness=7] (4);
			\end{tikzpicture}
			\tcbitem \begin{tikzpicture}[scale=.8]
				\tikzset{vertex/.style = {}}
				\tikzset{every loop/.style={min distance=10mm,in=45,out=-45,->}}
				\tikzset{edge/.style={decoration={markings,mark=at position 1 with %
							{\arrow[scale=1.5,>=stealth]{>}}},postaction={decorate}}}
				\node[vertex] (1) at (0, 0) {$\bullet$};
				\node[vertex] (2) at (2, 0) {$\bullet$};
				\node[vertex] (3) at (4, 0) {$\bullet$};
				\node[vertex] (4) at (6, 0) {$\bullet$};
				\draw[-{Latex[length=1.5mm,width=2mm]}] (1) to (2);
				\draw[-{Latex[length=1.5mm,width=2mm]}] (2) to (3);
				\draw[-{Latex[length=1.5mm,width=2mm]}] (3) to[bend right=30] (4);
				\draw[-{Latex[length=1.5mm,width=2mm]}] (4) to[bend right=30] (3);
			\end{tikzpicture}
			\tcbitem \begin{tikzpicture}[scale=.9]
				\tikzset{vertex/.style = {}}
				\tikzset{every loop/.style={min distance=10mm,in=45,out=-45,->}}
				\tikzset{edge/.style={decoration={markings,mark=at position 1 with %
							{\arrow[scale=1.5,>=stealth]{>}}},postaction={decorate}}}
				\node[vertex] (1) at (0, 0) {$\bullet$};
				\node[vertex] (2) at (2, 0) {$\bullet$};
				\node[vertex] (3) at (4, 0) {$\bullet$};
				\node[vertex] (4) at (6, 0) {$\bullet$};
				\draw[-{Latex[length=1.5mm,width=2mm]}] (1) to (2);
				\draw[-{Latex[length=1.5mm,width=2mm]}] (2) to[bend right=30] (3);
				\draw[-{Latex[length=1.5mm,width=2mm]}] (3) to[bend right=30] (2);
				\draw[-{Latex[length=1.5mm,width=2mm]}] (4) to (3);
			\end{tikzpicture}
		\end{tcbitemize}
	\end{thm}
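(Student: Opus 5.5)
By Theorem \ref{-2 to 2}, every rational preperiodic point of $\phi$ lies in $S=\{0,\pm1,\pm2\}$. The proof of Theorem \ref{portrait thm} then reduces to a finite combinatorial problem about how $\phi$ can act on $S$.

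First I make "height $0$" concrete. Since the coefficient of $X_1^d$ in $f_1$ is $1$, height $0$ over $\Q$ forces every coefficient to lie in $\{0,\pm1\}$. So $\phi(x)=\sum_{j=0}^d a_jx^j$ with $a_j\in\{-1,0,1\}$ and $a_d=\pm1$.

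The preperiodic portrait (excluding $\infty$) is the functional graph of $\phi$ restricted to those points of $S$ whose forward orbit stays in $S$. To find which graphs occur, I collect arithmetic constraints on the tuple $(\phi(0),\phi(1),\phi(-1),\phi(2),\phi(-2))$:
\begin{enumerate}
\item $\phi(0)=a_0\in\{0,\pm1\}$.
\item $a-b\mid \phi(a)-\phi(b)$, because $\phi\in\mathbb{Z}[x]$. In particular the parities of $\phi(0)$ and $\phi(\pm2)$ agree, $\phi(1)\equiv\phi(-1)\pmod 2$, and $\phi(2)\equiv\phi(-1)\pmod 3$.
\item Write $E=\sum_{j\ \mathrm{even}}a_jx^j$ and $O=\sum_{j\ \mathrm{odd}}a_jx^j$. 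Then $\phi(\pm1)=E(1)\pm O(1)$ and $\phi(\pm2)=E(2)\pm O(2)$.
\item A signed binary expansion $\sum a_j2^j$ with digits in $\{0,\pm1\}$ and nonzero top digit has absolute value at least $2^{d}-(2^d-2^{k+1})$ roughly, where $k$ is the lowest nonzero index. Hence $|\phi(2)|\le2$ forces the nonzero digits to be concentrated at the bottom, except for cancellation patterns like $2^d-2^{d-1}-\dots$.
\end{enumerate}

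The last constraint is the strongest. I would use it to show that if $2$ (respectively $-2$) is preperiodic, then $\phi(2)$ (respectively $\phi(-2)$) is determined by $a_0,a_1$ together with a telescoping tail. This also shows that $2$ and $-2$ cannot both be preperiodic, unless they map into $\{0,\pm1\}$ in a constrained way.

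Next I enumerate the possible maps $g\colon S\to\mathbb{Z}$ satisfying these constraints. For each one I keep only the points whose orbit stays in $S$, and record the resulting graph up to isomorphism.

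The main obstacle is the case analysis involving $\pm2$. Here I expect to need all of the following: the divisibility conditions $4\mid\phi(2)-\phi(-2)$ adjusted by parity, $3\mid\phi(2)-\phi(-1)$, and $3\mid\phi(-2)-\phi(1)$, combined with the signed-binary size estimate. Together these should exclude configurations such as long tails through $\pm2$, $2$-cycles containing $\pm2$ together with extra fixed points, and three preimages of a fixed point. The intended outcome is that at most four preperiodic points occur, and only in the listed shapes.

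For the converse direction, I exhibit explicit height-$0$ polynomials realizing each of the fourteen portraits, and verify each by direct computation on $S$. Candidates include $x^2$, $-x^2$, $x^3$, $x^2-1$, $-x^2+1$, $x^2-x$, $x^3-x$, $x^2-x-1$, and $-x^3+x+1$, plus suitable variants. Theorem \ref{-2 to 2} guarantees there are no other rational preperiodic points, so each computation on $S$ determines the full portrait.
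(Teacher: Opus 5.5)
Your reduction to $S=\{0,\pm1,\pm2\}$ is correct. Your congruences $a-b\mid\phi(a)-\phi(b)$ can replace the paper's appeal to Zieve's period bound: already $\phi(1)\equiv\phi(-1)\pmod 2$ rules out a $3$-cycle on $\{0,\pm1\}$. So the case where neither $\pm2$ is preperiodic is fine.

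The gap is in the step you yourself flag as the main obstacle. Constraints (1)--(3), together with a lower bound on $|\phi(\pm2)|$, are only necessary conditions on the values of $\phi$ on $S$, and they do not cut the candidates down to the listed shapes. For instance, take $\phi(0)=-1$, $\phi(1)=\phi(-1)=\phi(2)=1$, $\phi(-2)=-11$. This passes every divisibility you list: the parities, $3\mid\phi(2)-\phi(-1)$, $3\mid\phi(1)-\phi(-2)$, and $4\mid\phi(2)-\phi(-2)$. It is also compatible with $\phi(2)$ having the sign of $a_d$, $\phi(-2)$ having the sign of $(-1)^d a_d$, and $|\phi(\pm2)|\ge1$. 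Yet it gives an unlisted four-point shape: the fixed point $1$ has preimages $2$ and $-1$, and $0\mapsto-1$. Your phrases ``should exclude'' and ``intended outcome'' are exactly where the proof is missing. Item (4) is also misstated. A signed binary sum whose lowest nonzero digit is in position $k$ is an odd multiple of $2^k$, so the sharp bound is $2^k$. A bound of roughly $2^{k+1}$ would wrongly exclude $x^2-x-1$, where $\phi(2)=1$.

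The missing idea, which the paper uses, is that the size estimate pins down the whole polynomial, not just the value $\phi(2)$. Since $a_d^2=1$ and $2^d=1+\sum_{i<d}2^i$, one has $a_d\,\phi(2)=1+\sum_{i<d}(1+a_da_i)\,2^i$ with each $1+a_da_i\in\{0,1,2\}$. If $2$ is preperiodic, then $\phi(2)\in S\setminus\{0\}$, so $a_d\phi(2)\in\{1,2\}$. This forces $a_i=-a_d$ for $1\le i<d$ and $a_0\in\{-a_d,0\}$, i.e.\ $\phi=\pm(X^d-X^{d-1}-\cdots-X-1)$ or $\pm(X^d-X^{d-1}-\cdots-X)$.

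One then simply computes the orbits of all of $S$ under these four families, splitting into $d=2$, $d=3$, and the parity of $d\ge4$. In all four families $|\phi(-2)|\ge5$, so $2$ and $-2$ are never both preperiodic. The case of $-2$ reduces to that of $2$ via $\psi(x)=-\phi(-x)$.

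These families are also where the four-point shapes come from, so the converse needs explicit maps from them. Examples are $-X^3+X^2+X+1$ for $0\to1\to2\leftrightarrow-1$, and $X^3-X^2-X$ for a fixed point with a preimage plus two further fixed points. As it stands, your examples realize only six of the fourteen shapes.

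Finally, a complete enumeration will also turn up the empty portrait. For example, $x^2+1$ gives $0\mapsto1\mapsto2\mapsto5$, $-1\mapsto2$, $-2\mapsto5$. Neither the statement's list nor the paper's proof accounts for this case.
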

	
	\subsection*{Organization of the paper}
	
	In Section \ref{preliminary}, we review basic definitions and notation of height functions. Section \ref{bound} is devoted to the proof of Theorem \ref{main thm}. In Section \ref{application}, we apply this result to prove Theorem \ref{-2 to 2}. Finally, in Section \ref{portrait}, we prove Theorem \ref{portrait thm}.
	
	\subsection*{Acknowledgments}
	
	We thank our supervisor, Yohsuke Matsuzawa, for their invaluable comments and insightful discussions. We also thank Kaoru Sano and Hayato Matsumoto for their helpful advice.
	
	\section{Preliminaries\label{preliminary}}
	
	This section provides a brief overview of height functions, primarily based on \cite[Chapter 3]{sil}. Define the set $M_\mathbb{Q}$ as
	\begin{align*}
		M_\mathbb{Q}
		=
		\{ |\cdot|_p \mid p \text{ is a prime number, or } p=\infty \},
	\end{align*}
	where $|\cdot|_\infty$ denotes the standard archimedean absolute value, and for each prime number $p$, $|\cdot|_p$ denotes the non-archimedean absolute value defined by
	\begin{align*}
		\left|\frac{a}{b}\right|_p
		=
		p^{-\left(v_p(a)-v_p(b)\right)},
	\end{align*}
	where $a,b\in\mathbb Z$, $b\neq0$, and $v_p(n)$ denotes the exponent of $p$ in the prime factorization of a nonzero integer $n$.

	Let $K$ be a number field. For each absolute value $|\cdot|_p \in M_{\mathbb{Q}}$, we consider all absolute values $|\cdot|_v$ on $K$ whose restriction to $\mathbb{Q}$ coincides with $|\cdot|_p$. Let $M_K$ denote the set of all such absolute values on $K$.
	
	\begin{dfn}
		Let $N$ be a positive integer, and $K$ a number field. The \emph{multiplicative height} $H_K(P)$ and the \emph{additive height} $h_K(P)$ of $P=(x_0:\dots:x_N) \in \mathbb{P}^N(K)$ relative to $K$ are defined as
		\begin{align*}
			H_K(P) = \prod_{v \in M_K} \max \{ |x_0|_v, \dots, |x_N|_v \}^{n_v}, \quad
			h_K(P) = \log H_K(P),
		\end{align*}
		where $n_v = [K_v : \mathbb{Q}_p]$ with $p$ being the place of $\mathbb{Q}$ that $v$ lies above.
	\end{dfn}
	
	This definition actually does not depend on the choice of homogeneous coordinates for $P$.
	
	\begin{dfn}
		\label{naive_height}
		Take $P=(x_0:\dots:x_N) \in \mathbb{P}^N(\overline{\mathbb{Q}})$. We define the \emph{absolute multiplicative height} $H(P)$, and the \emph{absolute additive height} or \emph{naive height} $h(P)$ of $P$ as
		\begin{align*}
			H(P) = H_K(P)^{1/[K:\mathbb{Q}]}, \quad
			h(P) = \log H(P),
		\end{align*}
		where $K$ is a number field satisfying $P \in \mathbb{P}^N(K)$. This definition is known to be independent of the choice of $K$.
	\end{dfn}
	
	Although these heights depend on the dimension $N$, we suppress this dependence in the notation as is customary.
	
	\begin{rem}
		Let $x_0, \, \ldots, \, x_N \in \mathbb{Z}$ be coprime integers. Then, we have
		\begin{align}
			H(x_0: \cdots: x_N) = \max\{|x_0|, \, \ldots, \, |x_N|\}. 
		\end{align}
		For a proof, see, for example, {\cite[Remark 3.5]{sil}}.
	\end{rem}
	
	\section{Upper bound of $|\hat{h}_\phi(P) - h(P)|$\label{bound}}
	
	First, we cite a lemma from \cite[Theorem 3.11]{sil}. Although it holds in a more general setting, we present a version restricted to the one-dimensional case.
	
	\begin{lem}
		\label{upper bound}
		Let $\phi: \mathbb{P}^1 \to \mathbb{P}^1$ be a rational map defined over a number field $K$ of degree $d \geq 2$. Then
		\begin{align*}
			h(\phi(P)) - dh(P) \leq \log (d + 1) + h(\phi)
		\end{align*}
		for all $P \in \mathbb{P}^1(\overline{K})$.
	\end{lem}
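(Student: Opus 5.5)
The plan is the standard place-by-place estimate, following \cite[Theorem 3.11]{sil}. Write $\phi = (f_0 : f_1)$ with $f_i(X_0,X_1) = \sum_{j=0}^d a_{ij} X_0^{d-j} X_1^j$. Given $P \in \mathbb{P}^1(\overline{K})$, I will enlarge $K$ to a number field $L$ containing both the coefficients $a_{ij}$ and the coordinates of $P = (x_0 : x_1)$. This is harmless because the absolute height does not depend on the field used to compute it.

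Since $\phi$ is a morphism, $f_0$ and $f_1$ have no common zero on $\mathbb{P}^1$. Hence $(f_0(x_0,x_1) : f_1(x_0,x_1))$ is a valid choice of homogeneous coordinates for $\phi(P)$, and
\[
h(\phi(P)) = \frac{1}{[L:\mathbb{Q}]} \sum_{v \in M_L} n_v \log \max_i |f_i(x_0,x_1)|_v .
\]

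The key step is a local bound at each $v \in M_L$. Each $f_i$ has at most $d+1$ monomials, and each monomial satisfies $|a_{ij} x_0^{d-j} x_1^j|_v \le \max_{i,j} |a_{ij}|_v \cdot \max(|x_0|_v, |x_1|_v)^d$. So I will show
\[
\max_i |f_i(x_0,x_1)|_v \le \varepsilon_v \cdot \max_{i,j} |a_{ij}|_v \cdot \max(|x_0|_v,|x_1|_v)^d .
\]
Here $\varepsilon_v = d+1$ when $v$ is archimedean, using the ordinary triangle inequality on a sum of $d+1$ terms. When $v$ is non-archimedean, $\varepsilon_v = 1$ by the ultrametric inequality.

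Next I will take logarithms, multiply by $n_v$, and sum over all $v \in M_L$. The archimedean places contribute $\sum_{v \mid \infty} n_v \log(d+1) = [L:\mathbb{Q}] \log(d+1)$. The factors $\max_{i,j}|a_{ij}|_v$ assemble into $[L:\mathbb{Q}]\, h(\phi)$, and the factors $\max(|x_0|_v,|x_1|_v)^d$ assemble into $[L:\mathbb{Q}]\, d\, h(P)$. Dividing by $[L:\mathbb{Q}]$ gives
\[
h(\phi(P)) \le d\,h(P) + \log(d+1) + h(\phi),
\]
which is the claim.

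There is no real obstacle here; the argument is bookkeeping. The points to watch are these:
\begin{itemize}
\item the identity $\sum_{v\mid\infty} n_v = [L:\mathbb{Q}]$, which is what turns the archimedean constant into exactly $\log(d+1)$;
\item the fact that $f_0(P)$ and $f_1(P)$ do not both vanish, which justifies using them as coordinates of $\phi(P)$;
\item the invariance of $h(\phi)$ and $h(P)$ under rescaling of homogeneous coordinates, which makes the bound independent of the chosen representatives.
\end{itemize}
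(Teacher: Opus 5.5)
Your proof is correct and follows the standard route. The paper gives no proof of its own and simply cites \cite[Theorem 3.11]{sil}, whose upper bound is exactly this place-by-place triangle/ultrametric estimate, with the monomial count $\binom{N+d}{N}$ specializing to $d+1$ when $N=1$.
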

	
	In order to give a lower bound, we prepare a lemma:
	
	\begin{lem}
		\label{trianglish_matrix}
		Let $C$ be a square matrix of order $d$ of the form
		\begin{equation}
			C =
			\begin{pmatrix}
				a_{11} & a_{12} & 0 & 0 & \cdots & 0 \\
				a_{21} & a_{22} & a_{23} & 0 & \cdots & 0 \\
				\vdots & \vdots & \ddots & \ddots & \ddots & \vdots \\
				a_{d-2,1} & a_{d-2,2} & a_{d-2,3} & a_{d-2,4} & \ddots & 0 \\
				a_{d-1,1} & a_{d-1,2} & a_{d-1,3} & a_{d-1,4} & \ddots & a_{d-1,d} \\
				a_{d1} & a_{d2} & a_{d3} & a_{d4} & \cdots & a_{dd}
			\end{pmatrix},
		\end{equation}
		where each $a_{ij}$ is an element of some ring $R$. Then, $\det C$ is given by a sum of at most $2^{d-1}$ monomials in $a_{ij}$ of degree $d$, each having a coefficient $\pm 1$.
	\end{lem}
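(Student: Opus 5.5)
The plan is to induct on $d$ using cofactor expansion along the first row, which has only two possibly nonzero entries, $a_{11}$ and $a_{12}$. More precisely, I would prove the stronger statement that for any $d\times d$ matrix whose entries above the first superdiagonal vanish (that is, $a_{ij}=0$ whenever $j\ge i+2$), the determinant is a sum of at most $2^{d-1}$ monomials of degree $d$ in the $a_{ij}$, each with coefficient $\pm1$. The base case $d=1$ is trivial: $\det C=a_{11}$ is one monomial and $2^0=1$.

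For the inductive step, expand along the first row:
\begin{equation}
\det C = a_{11}\det M_{11} - a_{12}\det M_{12},
\end{equation}
where $M_{1j}$ denotes the minor obtained by deleting row $1$ and column $j$. The key check is that each $M_{1j}$ is again a $(d-1)\times(d-1)$ matrix of the same shape.

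For $M_{11}$ this is immediate. Its entries are $a_{i+1,j+1}$, and $j+1\ge (i+1)+2$ holds iff $j\ge i+2$, so the zero pattern is preserved.

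For $M_{12}$ the rows are $2,\dots,d$ and the columns are $1,3,4,\dots,d$. The new entry $(i,j)$ equals $a_{i+1,c_j}$, where $c_1=1$ and $c_j=j+1$ for $j\ge2$. If $j\ge i+2\ge 3$, then $c_j=j+1\ge (i+1)+2$, so this entry is $0$. The pattern is therefore preserved here as well.

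By induction, each $\det M_{1j}$ is a sum of at most $2^{d-2}$ monomials of degree $d-1$ with coefficients $\pm1$. Multiplying by $\pm a_{1j}$ gives monomials of degree $d$ with coefficients $\pm1$. Adding the two expansions gives at most $2\cdot 2^{d-2}=2^{d-1}$ such monomials.

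One small point: combining like terms could produce coefficients such as $\pm2$ or $0$. This does not happen, because each term of the Leibniz expansion corresponds to a distinct permutation, so the monomials $\prod_i a_{i\sigma(i)}$ are distinct and no cancellation or merging occurs. Since the statement allows a "sum of at most" this many terms, we do not even need to combine them.

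The main obstacle is the index check for $M_{12}$, which is done above.
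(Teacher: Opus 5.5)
Your proposal is correct and matches the paper's proof: induction on $d$, cofactor expansion along the first row (whose only possibly nonzero entries are $a_{11}$ and $a_{12}$), and the observation that both minors have the same shape, giving at most $2\cdot 2^{d-2}=2^{d-1}$ monomials. You also check explicitly that the minor $M_{12}$ keeps this shape, a step the paper leaves to its displayed matrices.
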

	
	\begin{proof}
		We prove the assertion by induction on $d$. For $d = 1$, we have $C = (a_{11})$ and its determinant $a_{11}$. For $d > 1$, by cofactor expansion along the first row, we have
		\begin{align}
			\det C &= a_{11}
			\begin{vmatrix}
				a_{22} & a_{23} & 0 & 0 & \cdots & 0 \\
				a_{32} & a_{33} & a_{34} & 0 & \cdots & 0 \\
				\vdots & \vdots & \ddots & \ddots & \ddots & \vdots \\
				a_{d-2,2} & a_{d-2,3} & a_{d-2,4} & a_{d-2,5} & \ddots & 0 \\
				a_{d-1,2} & a_{d-1,3} & a_{d-1,4} & a_{d-1,5} & \ddots & a_{d-1,d} \\
				a_{d2} & a_{d3} & a_{d4} & a_{d5} & \cdots & a_{dd}
			\end{vmatrix} \\
			&- a_{12}
			\begin{vmatrix}
				a_{21} & a_{23} & 0 & 0 & \cdots & 0 \\
				a_{31} & a_{33} & a_{34} & 0 & \cdots & 0 \\
				\vdots & \vdots & \ddots & \ddots & \ddots & \vdots \\
				a_{d-2,1} & a_{d-2,3} & a_{d-2,4} & a_{d-2,5} & \ddots & 0 \\
				a_{d-1,1} & a_{d-1,3} & a_{d-1,4} & a_{d-1,5} & \ddots & a_{d-1,d} \\
				a_{d1} & a_{d3} & a_{d4} & a_{d5} & \cdots & a_{dd}
			\end{vmatrix}.
		\end{align}
		By induction hypothesis, the assertion follows.
	\end{proof}
	
	Next, we give a lower bound:
	
	\begin{lem}
		\label{lower bound}
		Let $\phi: \mathbb{P}^1 \to \mathbb{P}^1$ be a polynomial map defined over a number field $K$ of degree $d \geq 2$. Then
		\begin{align*}
			h(\phi(P)) - dh(P) \geq - \big( d \log 2 + \log d + dh(\phi) \big)
		\end{align*}
		for all $P \in \mathbb{P}^1(\overline{K})$.
	\end{lem}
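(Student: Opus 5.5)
The plan is to follow the Nullstellensatz-style argument of \cite[Theorem 3.11]{sil}, but to exploit the special shape of a polynomial map. Write $\phi=(f_0:f_1)$ with $f_1=X_1^d$ and $f_0=\sum_{j=0}^d a_j X_0^{d-j}X_1^j$, where $a_0\neq 0$ since $\deg\phi=d$. The aim is explicit identities
\begin{align*}
	g_0 f_0 + g_1 f_1 = R\,X_0^{2d-1}, \qquad X_1^{2d-1} = X_1^{d-1} f_1,
\end{align*}
with $R$ a resultant-like constant ($R=a_0^{d}$ or a power of $a_0$). The second identity is free, and this is what removes the $d$-dependence that appears for general rational maps. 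For the first, I would take $g_0$ homogeneous of degree $d-1$ in $X_0,X_1$ with unknown coefficients, and require $g_0f_0\equiv R X_0^{2d-1}$ modulo $X_1^d$. This is a triangular linear system, solved by inverting $f_0(X_0,1)$ as a power series in $X_1$ up to order $d-1$. After clearing denominators, the coefficients of $g_0$ are (up to sign) determinants of matrices of the shape in Lemma~\ref{trianglish_matrix}: Cramer's rule applied to the lower-triangular Toeplitz system in the $a_j$ gives the Hessenberg-type matrices of that lemma. Each coefficient $b_k$ of $g_0$ is therefore a sum of at most $2^{d-1}$ monomials of degree $d-1$ in the $a_j$ with coefficients $\pm1$, and $R=a_0^d$. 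The polynomial $g_1$ is then determined as the quotient, and its coefficients are similarly bounded.

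Next, fix $P=(x_0:x_1)$ and a place $v$. From the two identities,
\begin{align*}
	|R|_v\max(|x_0|_v,|x_1|_v)^{2d-1} \leq \max\big(\,|g_0(x)f_0(x)+g_1(x)f_1(x)|_v,\ |x_1|_v^{d-1}|f_1(x)|_v\big),
\end{align*}
and I would bound the right side by $\epsilon_v\cdot \max_k|b_k|_v\cdot\max(|x_0|_v,|x_1|_v)^{d-1}\max(|f_0(x)|_v,|f_1(x)|_v)$. Here $\epsilon_v$ counts the number of terms at archimedean places (about $d$, which produces the $\log d$ term) and equals $1$ at nonarchimedean places. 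I would also bound $|b_k|_v \le 2^{d-1}\max_j|a_j|_v^{d-1}$ archimedeanly and $\le\max_j|a_j|_v^{d-1}$ otherwise. I would normalize coefficients of $g_1$ against the factor $1$ from $f_1$, and handle that extra coordinate with the coefficient $1$ of $X_1^d$ included in the $\max$ over coefficients of $\phi$.

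Finally, I would take logs, multiply by $n_v/[K:\mathbb{Q}]$, and sum over $v$. The product formula kills $\sum\log|R|_v$, the terms $\max_j|a_j|_v$ combine to at most $(d-1)h(\phi)\le d h(\phi)$, and the archimedean constants give $(d-1)\log2+\log d\le d\log 2+\log d$. The result is $(2d-1)h(P)\le (d-1)h(P)+h(\phi(P))+d\log2+\log d+dh(\phi)$, which is the claimed inequality.

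The main obstacle is the bookkeeping for $g_1$. Its coefficients come from a polynomial division and must also be shown to have at most about $2^{d}$ terms of the right degree; otherwise the constant degrades. If direct control is messy, I would instead avoid $g_1$ entirely: working at each place, either $|x_1|_v$ is comparable to the max, and the trivial identity $X_1^{d-1}f_1$ suffices, or $|x_1|_v$ is small. In the latter case, $|g_1(x)f_1(x)|_v$ is dominated via $|f_1(x)|_v\le\max(|f_0|_v,|f_1|_v)$, and the bound on $g_1$ needs only the crude triangular estimate.
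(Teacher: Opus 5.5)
Your strategy is the paper's. It combines the Sylvester-type identity for $X_0^{2d-1}$, the free identity $X_1^{2d-1}=X_1^{d-1}\cdot X_1^d$, Hessenberg determinants counted by Lemma~\ref{trianglish_matrix}, a local estimate, and the product formula. The series to invert is $f_0(1,X_1)$ modulo $X_1^d$, not $f_0(X_0,1)$. This is Cramer's rule on the triangular Toeplitz block, and gives $b_k=(-1)^k a_0^{d-1-k}\det H_k$, where $H_k$ is the $k\times k$ Hessenberg matrix with $a_1$ on the diagonal and $a_0$ just above it. However, two steps do not work as written.

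\textbf{The local inequality is false.} You claim $|R|_v\max(|x_0|_v,|x_1|_v)^{2d-1}\le\max\bigl(|g_0f_0+g_1f_1|_v,\ |x_1|_v^{d-1}|f_1(x)|_v\bigr)$. This fails whenever $|R|_v>1$ and $|x_1|_v$ dominates. For $\phi(z)=5z^d$, $P=(0:1)$, $v=\infty$, the left side is $5^d$ and the right side is $1$. The two identities must be normalized consistently: either use $RX_1^{2d-1}=RX_1^{d-1}f_1$, or divide $g_0,g_1$ by $R$ and keep the coefficient $1$. Either way, what you must bound is the height of the single projective vector $(b_0:\dots:b_{d-1}:c_0:\dots:c_{d-1}:R)$, with $c_m$ the coefficients of $g_1$. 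Then $|R|_v|P|_v^d\le\epsilon_v\max(|b_k|_v,|c_m|_v,|R|_v)\,|\phi(P)|_v$ holds at every place, and the product formula legitimately removes $|R|_v$. Including $R$ costs nothing, since $|R|_v=|a_0|_v^d\le\max(|a_0|_v,\dots,|a_d|_v,1)^d$. Your vague normalization against $1$ does not rescue this. A correct inequality with $\max(|b_k|_v,|c_m|_v,1)$ on the right must have $\min(|R|_v,1)$ on the left, which costs an extra $h(R)=d\,h(a_0)$. This is why the paper's coefficient vector contains $\det A_{1,2d}=a_0^d$.

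\textbf{You have no slack, so the $g_1$ count must be exact.} The $c_m$ and $R$ have degree $d$, not $d-1$, in the $a_j$. Also, $g_0(x)f_0(x)+g_1(x)f_1(x)$ is a sum of $2d$ terms, so the archimedean factor is $2d$, not about $d$. The argument therefore reaches $d\log2+\log d+d\,h(\phi)$ only if each $c_m$ is a sum of at most $2^{d-1}$ signed monomials. A count of ``about $2^d$'' would give $(d+1)\log2+\log d$.

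The count does hold. We have $c_m=-\sum_{l=m}^{d-1}b_l\,a_{d+m-l}$. For $l\ge1$, $b_l$ has at most $2^{l-1}$ monomials, and $b_0=a_0^{d-1}$ has one. So $c_m$ has at most $1+\sum_{l=1}^{d-1}2^{l-1}=2^{d-1}$. The paper gets the same bound by applying Cramer's rule to the $g_1$-unknowns as well, which produces $d\times d$ Hessenberg determinants to which Lemma~\ref{trianglish_matrix} applies directly.

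Your fallback does not sidestep this. At places where $|x_0|_v$ is the maximum you still need $|g_1(x)|_v\le d\max_m|c_m|_v\,|P|_v^{d-1}$ (archimedean), which is exactly the same coefficient bound. A cruder count would overshoot the stated constant.
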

	
	\begin{proof}
		Let $\phi = (f_0(X_0,X_1): f_1(X_0,X_1))$ with homogeneous polynomials
		\begin{equation}
			f_0(X_0,X_1) = a_0 X_0^d + a_1 X_0^{d-1} X_1 + \dots + a_d X_1^d, \quad f_1(X_0,X_1) \in K[X_0,X_1]
		\end{equation}
		of degree $d$. Since $\phi$ is a polynomial map, we can take
		\begin{equation}
			f_1(X_0,X_1) = X_1^d.
		\end{equation}
		Following the proof of \cite[Proposition 3.2]{yap}, we will find $u_{i0}$, \ldots, $u_{i, d-1}$, $v_{i0}$, \ldots, $v_{i, d-1} \in K$ for $i = 0, \, 1$ such that
		\begin{equation}
			\label{equation}
			X_i^{2d-1} = g_{i0}(X_0, X_1) f_0(X_0, X_1) + g_{i1}(X_0, X_1) X_1^d,
		\end{equation}
		where
		\begin{align}
			&g_{i0}(X_0,X_1) = u_{i0} X_0^{d-1} + u_{i1} X_0^{d-2} X_1 + \dots + u_{i, d-1}X_1^{d-1}, \\
			&g_{i1}(X_0,X_1) = v_{i0} X_0^{d-1} + v_{i1} X_0^{d-2} X_1 + \dots + v_{i, d-1}X_1^{d-1}.
		\end{align}
		Since $f_0$ and $X_1^d$ have no common roots other than $(0,0)$, their homogeneous resultant $\det A$ is non-zero, where $A$ is the Sylvester matrix given by
		\[
		A =
		\begin{pmatrix}
			a_0 & 0 & \cdots & 0 & 0 & 0 & \cdots & 0 \\
			a_1 & a_0 & 0 & \vdots & 0 & 0 & \cdots & 0 \\
			a_2 & a_1 & a_0 & \vdots & \vdots & \vdots & \ddots & \vdots \\
			\vdots & \vdots & \vdots & a_0 & 0 & 0 & \cdots & 0 \\
			a_d & a_{d-1} & a_{d-2} & a_1 & 1 & 0 & \cdots & 0 \\
			0 & a_d & a_{d-1} & a_2 & 0 & 1 & 0 & \vdots \\
			\vdots & 0 & \vdots & \vdots & \vdots & 0 & \ddots & 0 \\
			0 & \cdots & 0 & a_d & 0 & \cdots & 0 & 1
		\end{pmatrix}.
		\]
		Let $\delta_{ij}$ denote the Kronecker delta. Consider the system of equations
		\begin{align}
			A
			\begin{pmatrix}
				u_{i0} \\ u_{i1} \\ \vdots \\ u_{i, d - 1} \\ v_{i0} \\ v_{i1} \\ \vdots \\ v_{i, d - 1}
			\end{pmatrix}
			=
			\begin{pmatrix}
				\delta_{i0} \\ 0 \\ \vdots \\ 0 \\ 0 \\ \vdots \\ 0 \\ \delta_{i1}
			\end{pmatrix}.
		\end{align}
		Let $A_{ij}$ be a matrix obtained by replacing the $j$-th column of $A$ with the vector ${}^t (\delta_{i0}, 0, \dots, 0, \delta_{i1})$.
		Then, by Cramer's rule, each $u_{ij}$ and $v_{ij}$ can be expressed as $\frac{\det A_{ik}}{\det A}$ for some $k$. Then, we have
		\begin{align}
			\label{height_of_coeff}
			H(&u_{00}: \dots: u_{0, d - 1}: v_{00}: \dots: v_{0, d - 1}: \\
			&u_{10}: \dots: u_{1, d - 1}: v_{10}: \dots: v_{1, d - 1}) \\
			&= H(\det A_{01}: \dots: \det A_{0,2d}: \det A_{11}: \dots: \det A_{1,2d}).
		\end{align}
		Considering the case where $i=1$, we have $\det A_{1, 2d} = \det A = a_0^d$. For $1 \leq j < 2d$, we have $\det A_{1j} = 0$. For the case where $i=0$, if $1 \leq j \leq d$, it suffices to focus on the determinant of the upper-left submatrix of $A_{0j}$. Thus
		\begin{align}
			\det A_{0j} &=
			\begin{vmatrix}
				a_0 & 0 & \cdots & 1 & \cdots & 0 \\
				a_1 & a_0 & \cdots & 0 & \cdots & 0 \\
				\vdots & \vdots & \cdots & \vdots & \cdots & \vdots \\
				a_{d - 1} & a_{d - 2} & \cdots & 0 & \cdots & a_0
			\end{vmatrix} \\
			&= (-1)^{1 + j}
			\begin{vmatrix}
				a_1 & a_0 & 0 & \cdots & 0 \\
				a_2 & a_1 & a_0 & \cdots & \vdots \\
				a_3 & a_2 & a_1 & \ddots & 0 \\
				\vdots & \vdots & \vdots & \ddots & a_0 \\
				a_{j - 1} & a_{j - 2} & \cdots & a_2 & a_1
			\end{vmatrix}
			\begin{vmatrix}
				a_0 & 0 & \cdots & 0 \\
				a_1 & a_0 & \cdots & 0 \\
				\vdots & \vdots & \ddots & \vdots \\
				a_{d - j - 1} & a_{d - j - 2} & \cdots & a_0
			\end{vmatrix} \\
			&\label{determinant1}= (-1)^{1 + j} a_0^{d - j}
			\begin{vmatrix}
				a_1 & a_0 & 0 & \cdots & 0 \\
				a_2 & a_1 & a_0 & \cdots & \vdots \\
				a_3 & a_2 & a_1 & \ddots & 0 \\
				\vdots & \vdots & \vdots & \ddots & a_0 \\
				a_{j - 1} & a_{j - 2} & \cdots & a_2 & a_1
			\end{vmatrix}.
		\end{align}
		If $d + 1 \leq j \leq 2d$, by cofactor expansion along the $(d+1)$-th to $2d$-th columns, we have
		\begin{align}
			\label{determinant2}
			\det A_{0j} &= \pm
			\begin{vmatrix}
				a_1 & a_0 & \cdots & 0 & \cdots & 0 & 0 \\
				a_2 & a_1 & \ddots & \vdots & \ddots & \ddots & \vdots \\
				a_3 & a_2 & \ddots & a_0 & \ddots & 0 & 0 \\
				\vdots & \vdots & \vdots & a_1 & \ddots & 0 & 0 \\
				a_{d - 2} & a_{d - 3} & \cdots & \vdots & \ddots & a_0 & 0 \\
				a_{d - 1} & a_{d - 2} & \cdots & a_{2d - j} & \cdots & a_1 & a_0 \\
				0 & 0 & \cdots & a_d & \cdots & a_{j - d + 1} & a_{j - d}
			\end{vmatrix}.
		\end{align}
		For any integer $m$ and $v \in M_K$, we define $\delta_v(m)$ to be $m$ if $v$ is archimedean and $1$ otherwise. Put $|\phi|_v = \max\{|a_0|_v, \, \ldots, \, |a_d|_v, \, 1\}$ and $H(\phi) = H(a_0: \dots : a_d: 1)$ = $\prod_{v \in M_K} |\phi|_v^{n_v}$.
		Then, by \eqref{height_of_coeff}, \eqref{determinant1}, \eqref{determinant2}, and Lemma \ref{trianglish_matrix}, we have
		\begin{align}
			& H(u_{00}: \dots: v_{1, d - 1})
			= H(\det A_{01}: \dots: \det A_{1,2d}) \\
			&= \prod_{v \in M_K} \max_{\substack{i=0,1 \\ 1 \leq j \leq 2d}} |\det A_{ij}|_v^{n_v}
			\leq \prod_{v \in M_K} \big( \delta_v(2^{d-1}) |\phi|_v^d \big)^{n_v} \\
			&= 2^{d-1} \left( \prod_{v \in M_K} |\phi|_v^{n_v} \right)^d
			= 2^{d-1} H(\phi)^d.
		\end{align}
		
		Now, let $P = (x_0: x_1) \in \P^1(\overline{K})$. Take a finite extension $L/K$ so that $P \in \P^1(L)$, and take any $v \in M_L$. 
		Put $|P|_v = \max_{i=0,1} |x_i|_v$. Note that although $|P|_v$ depends on the choice of homogeneous coordinates, $H(P)$ does not.
		By \eqref{equation}, we have
		\begin{align}
			|P|_v^{2d-1} &= \max_{i = 0, 1} |x_i|_v^{2d-1}
			= \max_{i = 0, 1} |g_{i0}(P) f_0(P) + g_{i1}(P) f_1(P)|_v \\
			& \leq \delta_v(2) \max_{\substack{i = 0, 1 \\ j = 0, 1}} |g_{ij}(P) f_j(P)|_v 
			\leq \delta_v(2) \max_{\substack{i = 0, 1 \\ j = 0, 1}} \delta_v(d) |g_{ij}|_v |P|_v^{d - 1} |f_j(P)|_v \\
			& \leq \delta_v(2d) \left( \max_{\substack{i = 0, 1 \\ j = 0, 1}} |g_{ij}|_v \right) |P|_v^{d - 1} |\phi(P)|_v.
		\end{align}
		Hence, we have
		\begin{equation}
			|P|_v^d \leq \delta_v(2d) \left( \max_{\substack{i = 0, 1 \\ j = 0, 1}} |g_{ij}|_v \right) |\phi(P)|_v.
		\end{equation}
		Raising both sides to the power of $n_v$ and take the product over $v \in M_L$, we have
		\begin{align}
			H(P)^d &= \left( \prod_{v \in M_L} |P|_v^{n_v} \right)^d
			= \prod_{v \in M_L} \left( |P|_v^d \right)^{n_v} \\
			&\leq \prod_{v \in M_L} \left( \delta_v(2d) \left( \max_{\substack{i = 0, 1 \\ j = 0, 1}} |g_{ij}|_v \right) |\phi(P)|_v \right)^{n_v} \\
			&= 2d H(u_{00}: \cdots: v_{1, d-1}) H(\phi(P)) \\
			&\leq 2^d d H(\phi)^d H(\phi(P)).
		\end{align}
		Taking the logarithm of both sides, we have
		\begin{equation}
			h(\phi(P)) - dh(P) \geq - \big( d\log2 + \log d + dh(\phi) \big).
		\end{equation}
	\end{proof}
	
	Now, we prove Theorem \ref{main thm}.
	
	\begin{proof}[Proof of Theorem \ref{main thm}]
		Let $P \in \P^1(\overline{K})$.
		Combining Lemma \ref{upper bound} with Lemma \ref{lower bound}, we have
		\begin{equation}
			| h(\phi(P)) - dh(P) | \leq d \log 2 + \log d + dh(\phi).
		\end{equation}
		Hence
		\begin{align}
			\left| \frac{h(\phi^n(P))}{d^n} - h(P) \right| &\leq
			\sum_{i = 1}^{n} \left| \frac{h(\phi^i(P))}{d^i} - \frac{h(\phi^{i - 1}(P))}{d^{i - 1}} \right| \\
			&\leq \sum_{i = 1}^{n} \frac{1}{d^i} \big( d\log2 + \log d + dh(\phi) \big) \\
			&= \frac{1 - (1/d)^n}{1 - 1/d} \left( \log2 + \frac{\log d}{d} + h(\phi) \right).
		\end{align}
		Taking the limit as $n \to \infty$, we obtain the assertion of the theorem.
	\end{proof}
	
	\section{Proof of Theorem \ref{-2 to 2}\label{application}}
	
	Throughout this sectoin and the next, we work over $\mathbb{Q}$.
	
	\begin{dfn}
		Let $\phi \colon \P^1 \to \P^1$ be a rational map defined over $\mathbb{Q}$.
		A point $P \in \P^1(\Q)$ is \emph{preperiodic} if there exist integers $n \geq 0$ and $m \geq 1$ such that $\phi^{n+m}(P) = \phi^n(P)$.
	\end{dfn}
	
	\begin{dfn}
		For $\alpha \in \mathbb{Q}$, $h(\alpha)$ is defined to be $h((\alpha:1))$.
	\end{dfn}
	
	From now on, we apply Theorem \ref{main thm} to polynomial maps $\phi$ of height $0$.
	In this case, we can write $\phi$ as
	\begin{equation}
		\phi(X) = a_d X^d + \cdots + a_0, \quad a_d, \, \ldots, \, a_0 \in \{-1, \, 0, \, 1\} \text{ with } a_d \neq 0.
	\end{equation}
	We investigate which rational numbers can be preperiodic for some such polynomials. Recall:
	
	\begin{lem}[e.g.~{\cite[Theorem 3.22]{sil}}]
		Let $\phi \colon \P^1 \to \P^1$ be a rational map defined over $\Q$.
		Then, for any preperiodic point $P \in \P^1(\overline{\Q})$ for $\phi$, we have $\hat{h}_\phi (P) = 0$.
	\end{lem}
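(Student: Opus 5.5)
The plan is to argue directly from the definition of $\hat{h}_\phi$ as the limit $\lim_{n\to\infty} h(\phi^n(P))/d^n$. The limit is known to exist (as recalled after the definition of the canonical height), so it is enough to squeeze the terms of the sequence between $0$ and a quantity tending to $0$. The key input is that a preperiodic point has a finite forward orbit.

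First, let $P$ be preperiodic, so there are $n_0 \geq 0$ and $m \geq 1$ with $\phi^{n_0+m}(P) = \phi^{n_0}(P)$. By induction on $n$, every iterate $\phi^n(P)$ with $n \geq n_0$ equals one of $\phi^{n_0}(P), \dots, \phi^{n_0+m-1}(P)$. Hence the orbit $\{\phi^n(P) : n \geq 0\}$ is a finite set, contained in $\{P, \phi(P), \dots, \phi^{n_0+m-1}(P)\}$. Put
\[
B = \max_{0 \leq k < n_0+m} h(\phi^k(P)),
\]
which is a finite real number. Then $h(\phi^n(P)) \leq B$ for all $n$.

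Second, I will record that the naive height is nonnegative. Take homogeneous coordinates $(x_0 : x_1)$ of a point in $\P^1(L)$ and scale so that some coordinate equals $1$. Then $\max\{|x_0|_v, |x_1|_v\} \geq 1$ at every $v \in M_L$, so $H \geq 1$ and $h \geq 0$; this uses that $H$ does not depend on the choice of coordinates.

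Combining the two steps gives
\[
0 \leq \frac{h(\phi^n(P))}{d^n} \leq \frac{B}{d^n}
\]
for every $n$. Since $d \geq 2$, the right-hand side tends to $0$, so the limit is $0$ and $\hat{h}_\phi(P) = 0$.

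There is no real obstacle. The only points needing care are the finiteness of the orbit and the nonnegativity of $h$. An alternative route would use the functional equation $\hat{h}_\phi(\phi(P)) = d\,\hat{h}_\phi(P)$, which follows immediately from the limit definition. It yields $d^{n_0+m}\hat{h}_\phi(P) = d^{n_0}\hat{h}_\phi(P)$, hence $(d^m - 1)\hat{h}_\phi(P) = 0$ and again $\hat{h}_\phi(P) = 0$ since $d^m \neq 1$.
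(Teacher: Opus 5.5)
Your proof is correct and is essentially the standard argument. The paper gives no proof of its own and only cites Silverman's Theorem 3.22, where this direction is proved the same way: the forward orbit of a preperiodic point is finite, or equivalently one uses the functional equation $\hat{h}_\phi(\phi(P)) = d\,\hat{h}_\phi(P)$ that you give as your alternative route.
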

	
	Therefore, if a rational number $\alpha$ is preperiodic for $\phi$ of height $0$, it is necessary that we have
	\begin{equation}
		\label{inequality}
		|h(\alpha)| \leq \frac{1}{1 - 1/d} \left( \log2 + \frac{\log d}{d} \right).
	\end{equation}
	
	Moreover, we prepare the following lemma:
	
	\begin{lem}
		\label{sanosan}
		Let $\phi(X) = \pm X^d + a_{d-1} X^{d-1} + \cdots + a_0 \in \mathbb{Z}[X]$ be a polynomial map. If a rational number $\alpha \in \Q$ is preperiodic for $\phi$, then $\alpha \in \mathbb{Z}$.
	\end{lem}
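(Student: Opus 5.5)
We argue $p$-adically, prime by prime, and show that no prime can divide the denominator of $\alpha$. Write $\alpha = a/b$ in lowest terms with $b \geq 1$, and suppose some prime $p$ divides $b$, so that $|\alpha|_p > 1$. The key observation is that, because the leading coefficient $\pm 1$ is a $p$-adic unit and every other coefficient lies in $\mathbb{Z}$, the leading term strictly dominates at $p$. For $|x|_p > 1$ we have $|a_i x^i|_p \leq |x|_p^{d-1} < |x|_p^d$ for every $i \leq d-1$. The ultrametric inequality, in its strict form, then gives
\begin{equation}
|\phi(x)|_p = |x|_p^d .
\end{equation}

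Applying this repeatedly, we obtain by induction that $|\phi^n(\alpha)|_p = |\alpha|_p^{d^n}$ for all $n \geq 0$. At each step the hypothesis $|\phi^{n}(\alpha)|_p > 1$ is preserved, since $|\alpha|_p^{d^n} > 1$. Because $d \geq 2$ and $|\alpha|_p > 1$, the sequence $|\alpha|_p^{d^n}$ is strictly increasing. Hence the values $\phi^n(\alpha)$ are pairwise distinct, and the forward orbit of $\alpha$ is infinite.

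This contradicts the assumption that $\alpha$ is preperiodic, which forces the orbit $\{\phi^n(\alpha)\}$ to be finite. So no prime divides $b$, which means $b = 1$ and $\alpha \in \mathbb{Z}$.

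There is essentially no obstacle here. The only point that needs care is the strict ultrametric inequality: the maximum of $|a_i x^i|_p$ is attained uniquely by the leading term, and this is exactly where we use that the leading coefficient is $\pm 1$ rather than merely an integer. Alternatively, one could phrase the argument as a rational-root-theorem style statement, namely that $\phi$ maps non-integers to non-integers with strictly growing denominators, but the $p$-adic formulation above is the cleanest.
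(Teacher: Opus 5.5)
Your proof is correct, but it takes a different route from the paper's.

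\textbf{The paper's route (global, algebraic).} From $\phi^{n+m}(\alpha)=\phi^n(\alpha)$, it observes that $\alpha$ is a root of $\phi^{n+m}(X)-\phi^n(X)\in\mathbb{Z}[X]$. This polynomial has leading coefficient $\pm1$, since iterates of $\phi$ keep leading coefficient $\pm 1$ and $d^{n+m}>d^n$. So $\alpha$ is an algebraic integer, hence lies in $\mathbb{Z}$ by integral closedness.

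\textbf{Your route (local, dynamical).} At a prime $p$ dividing the denominator, the leading term of $\phi$ dominates. This gives $|\phi^n(\alpha)|_p=|\alpha|_p^{d^n}$, which grows strictly, so the orbit never repeats.

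\textbf{How they relate.} The standard proof that a rational root of a monic integer polynomial is an integer is exactly your dominance estimate. The paper applies it once, to $\phi^{n+m}-\phi^n$; you apply it to $\phi$ at every step of the orbit.

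\textbf{What each buys.} The paper's version is a one-liner once integral closedness is granted, and it needs no absolute values. Yours avoids forming the iterate difference and checking its leading coefficient. It also makes the role of $d\ge 2$ explicit, as strict growth rather than $d^{n+m}>d^n$. Finally, it proves more: a non-integer has a $p$-adically unbounded orbit, with $\log|\phi^n(\alpha)|_p/d^n=\log|\alpha|_p>0$. So your argument works verbatim over any non-archimedean field for maps with $p$-integral coefficients and unit leading coefficient. It is the local ingredient one would use for canonical-height estimates at finite places.
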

	
	\begin{proof}
		By the assumption that $\alpha$ is preperiodic for $\phi$, we have $\phi^{n + m}(\alpha) - \phi^n(\alpha) = 0$ for some $n \geq 0$ and $m > 0$. Since the coefficients of $\phi^{n + m}(X) - \phi^n(X)$ are integral and its leading coefficient is $\pm 1$, $\alpha$ is an algebraic integer. Given that $\alpha \in \Q$, it follows that $\alpha \in \mathbb{Z}$, since $\mathbb{Z}$ is integrally closed.
	\end{proof}
	
	\subsection{Case $d = 2$}
	
	For $d = 2$, Inequality \eqref{inequality} can be rewritten as
	\begin{equation}
		|H(\alpha)| \leq 8.
	\end{equation}
	This implies that when expressed as an irreducible fraction, both the numerator and the denominator of preperiodic points for a polynomial map of height $0$ and degree $2$ are at most $8$. Furthermore, by Lemma \ref{sanosan}, we do not need to consider $\alpha \notin \mathbb{Z}$.
	
	Let $a, \, b \in \{-1, \, 0, \, 1\}$. If $\alpha \geq 4$, then
	\begin{equation}
		\alpha^2 + a \alpha + b \geq \alpha^2 - \alpha - 1 \geq 4^2 - 4 - 1 = 11.
	\end{equation}
	Thus, we do not need to consider $\alpha \geq 4$. Similarly, if $\alpha \leq -4$, then
	\begin{equation}
		\alpha^2 + a \alpha + b \geq \alpha^2 + \alpha - 1 \geq (-4)^2 + (-4) - 1 = 11.
	\end{equation}
	Thus, we do not need to consider $\alpha \leq -4$. Next, for $\alpha = \pm 3$, we have
	\begin{align}
		3^2 + 3a + b &\geq 3^2 - 3 - 1 = 5, \\
		(-3)^2 + (-3)a + b &\geq (-3)^2 + (-3) - 1 = 5.
	\end{align}
	Recall that we have already established that integers at least $4$ are not preperiodic. Since the image of $\pm 3$ is at least $5$ (which implies it maps to a non-preperiodic point), it follows that $\pm 3$ are not preperiodic either.
	
	Therefore, if $\alpha$ is a preperiodic point for a quadratic polynomial map of height $0$ defined over $\Q$, then $\alpha$ is an integer satisfying $-2 \leq \alpha \leq 2$.
	
	\subsection{Case $d \geq 3$}
	
	For $d \geq 3$, Inequality \eqref{inequality} can be rewritten as
	\begin{equation}
		|H(\alpha)| \leq 4.
	\end{equation}
	By Lemma \ref{sanosan}, it suffices to consider integers $\alpha$ such that $|\alpha| \leq 4$.
	Furthermore, if $\alpha \geq 3$, then
	\begin{align}
		\alpha^d + a_{d - 1} \alpha^{d - 1} + \cdots + a_0
		&\geq \alpha^d - \alpha^{d - 1} - \cdots - 1
		\geq \alpha^3 - \alpha^2 - \alpha - 1 \\
		&\geq 3^3 - 3^2 - 3 - 1 = 14.
	\end{align}
	Hence, if $d \geq 3$, then $\alpha \geq 3$ is not preperiodic for polynomial maps of height $0$. Next, consider $\alpha \leq -3$. If $d \geq 3$ is odd, we have the inequality
	\begin{align}
		\alpha^d + a_{d - 1} \alpha^{d - 1} + \cdots + a_0
		&\leq \alpha^d + \alpha^{d-1} - \alpha^{d-2} + \cdots - \alpha + 1 = \alpha^d + \frac{\alpha^d + 1}{\alpha + 1}.
	\end{align}
	Let $g_d(\alpha)$ denote the right-hand side. The difference $g_d(\alpha) - g_3(\alpha)$ factors as
	\begin{align}
		g_d(\alpha) - g_3(\alpha) = (\alpha^d - \alpha^3) + \frac{\alpha^d - \alpha^3}{\alpha+1} = \frac{\alpha^3(\alpha^{d-3}-1)(\alpha+2)}{\alpha+1}.
	\end{align}
	Since $\alpha \le -3$ and $d-3$ is an even non-negative integer, this fraction is non-positive, implying $g_d(\alpha) \le g_3(\alpha)$.
	Furthermore,
	\begin{align}
		g_3(\alpha) - (-14) = \alpha^3 + \alpha^2 - \alpha + 15 = (\alpha+3) \big( (\alpha-1)^2+4 \big) \le 0.
	\end{align}
	Thus, we conclude $g_3(\alpha) \leq -14$.
	
	Similarly, if $d \geq 4$ is even, we have
	\begin{align}
		\alpha^d + a_{d - 1} \alpha^{d - 1} + \cdots + a_0
		&\geq \alpha^d + \alpha^{d-1} - \alpha^{d-2} + \cdots + \alpha - 1 = \alpha^d + \frac{\alpha^d - 1}{\alpha + 1}.
	\end{align}
	Let $h_d(\alpha)$ denote the right-hand side. The difference $h_d(\alpha) - h_4(\alpha)$ factors as
	\begin{align}
		h_d(\alpha) - h_4(\alpha) = (\alpha^d - \alpha^4) + \frac{\alpha^d - \alpha^4}{\alpha+1} = \frac{\alpha^4(\alpha^{d-4}-1)(\alpha+2)}{\alpha+1}.
	\end{align}
	Since $\alpha \le -3$ and $d-4$ is an even non-negative integer, this fraction is non-negative, implying $h_d(\alpha) \ge h_4(\alpha)$.
	Moreover,
	\begin{align}
		h_4(\alpha) - 41 = \alpha^4 + \alpha^3 - \alpha^2 + \alpha - 42 = (\alpha+3)(\alpha^3 - 2\alpha^2 + 5\alpha - 14) \ge 0,
	\end{align}
	where the last inequality holds because $\alpha+3 \le 0$ and the cubic polynomial $\alpha^3 - 2\alpha^2 + 5\alpha - 14$ is strictly negative (as all terms are negative for $\alpha \le -3$).
	Hence, $|\alpha| \geq 3$ is not preperiodic for polynomial maps of height $0$. This completes the proof of Theorem \ref{-2 to 2}.
	
	\section{Proof of Theorem \ref{portrait thm}\label{portrait}}
	
	In this section, we determine which portrait can be realized. As in the previous section, we work over 
	$\mathbb{Q}$. First, it is known that all periodic points under our consideration have a period of at most 2.
	
	\begin{lem}
		Let $\phi(X) = a_d X^d + \cdots + a_0 \in \mathbb{Z}[X]$ where $a_d = \pm 1$.
		Then, all integral periodic points for $\phi$ have a period of at most 2.
	\end{lem}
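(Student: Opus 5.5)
The plan is to use the classical divisibility argument for polynomials with integer coefficients, namely that $a - b \mid \phi(a) - \phi(b)$ for all integers $a, b$. Let $x_0 \in \mathbb{Z}$ be a periodic point of exact period $n$, and set $x_{k} = \phi^k(x_0)$, with indices taken modulo $n$. Suppose $n \geq 2$, so that consecutive points are distinct. Define $d_k = x_{k+1} - x_k$; each $d_k$ is nonzero.

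The first step is to apply the divisibility to the pair $(x_k, x_{k+1})$. This gives $d_k \mid d_{k+1}$ for every $k$. Going once around the cycle, $d_0 \mid d_1 \mid \cdots \mid d_{n-1} \mid d_0$, and therefore all the $|d_k|$ are equal to a common value $c > 0$.

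The second step uses the fact that $\sum_k d_k = x_n - x_0 = 0$. Since every $d_k = \pm c$ and the sum vanishes, both signs must occur. Hence there is some $k$ with $d_{k+1} = -d_k$, which means $x_{k+2} = x_k$. The point $x_k$ lies on the cycle and has exact period $n$, so $n \mid 2$ and therefore $n = 2$. Combined with the case $n = 1$, this shows every integral periodic point has period at most $2$.

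Two remarks on hypotheses. The assumption $a_d = \pm 1$ is not actually needed for this argument; only integrality of the coefficients is used. It is harmless, and it fits the context of Lemma \ref{sanosan}, which is what guarantees that the periodic points under consideration are integral in the first place. The only delicate point I expect is justifying that the $d_k$ are nonzero, so that the divisibility chain makes sense. This holds because consecutive points of a cycle of length $n \geq 2$ are distinct, and for $n = 1$ there is nothing to prove.
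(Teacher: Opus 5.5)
Your proof is correct. The paper gives no argument of its own here: it simply cites \cite[Lemma 28]{zieve}.

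What you wrote is the classical self-contained proof that a polynomial with integer coefficients has no cycles of length greater than $2$ in $\mathbb{Z}$. The divisibility $x_{k+1}-x_k \mid x_{k+2}-x_{k+1}$, applied around the cycle, forces all consecutive differences to have the same absolute value. The vanishing of their sum then forces a sign change, which gives $x_{k+2}=x_k$ and hence $n \mid 2$. The details also hold: the differences are nonzero for exact period $n\ge 2$, and the sign-change step works with cyclic indexing.

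Your route makes the section self-contained. It also shows that the hypothesis $a_d=\pm1$ plays no role in this lemma; it matters only through Lemma~\ref{sanosan}, to guarantee that rational preperiodic points are integers. The paper's citation is shorter and places the fact within Zieve's broader treatment of cycles of polynomial mappings. The cost is that the reader must consult a thesis for what is a few-line argument.
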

	
	\begin{proof}
		This follows from \cite[Lemma 28]{zieve}.
	\end{proof}
	
	According to this lemma, the portrait shown below, for example, cannot be realized:
	\begin{equation}
		\begin{tikzpicture}[scale=.9]
			\tikzset{vertex/.style = {}}
			\tikzset{every loop/.style={min distance=10mm,in=45,out=-45,->}}
			\tikzset{edge/.style={decoration={markings,mark=at position 1 with %
						{\arrow[scale=1.5,>=stealth]{>}}},postaction={decorate}}}
			\node[vertex] (1) at (0, 0) {$\bullet$};
			\node[vertex] (2) at (2, 0) {$\bullet$};
			\node[vertex] (3) at (4, 0) {$\bullet$};
			\draw[-{Latex[length=1.5mm,width=2mm]}] (1) to (2);
			\draw[-{Latex[length=1.5mm,width=2mm]}] (2) to (3);
			\draw[-{Latex[length=1.5mm,width=2mm]}] (3) to[bend right=40] (1);
		\end{tikzpicture}
	\end{equation}
	
	First, we determine when $2$ or $-2$ is preperiodic for a polynomial map $\phi$ of height $0$.
	
	\subsection{When is $2$ preperiodic}
	
	Let $\phi(X) = \sum_{i=0}^{d} a_i X^i$ with $a_i \in \{-1, \, 0, \, 1\}$ and $a_d \neq 0$. If $\phi$ is monic, it follows that
	\begin{align}
		\phi(2) \geq 2^d - \sum_{i=0}^{d-1} 2^i = 1.
	\end{align}
	If $a_d = -1$, we have
	\begin{align}
		\phi(2) \leq -2^d + \sum_{i=0}^{d-1} 2^i = -1.
	\end{align}
	Therefore, we only need to consider the following cases:
	\begin{enumerate}
		\item For $\phi(X) = X^d - X^{d-1} - \cdots - X - 1$, we have $2 \mapsto 1 \mapsto 1 - d$.
		\begin{itemize}
			\item If $d = 2$, then $\phi(-1) = (-1)^2 - (-1) - 1 = 1$, so $2$ is preperiodic.
			In this case, we have $\phi(-2) = (-2)^2 - (-2) - 1 = 5$, which is not preperiodic. Hence, the portrait is as follows:
			\begin{equation}
				\begin{tikzpicture}[scale=.9]
					\tikzset{vertex/.style = {}}
					\tikzset{every loop/.style={min distance=10mm,in=45,out=-45,->}}
					\tikzset{edge/.style={decoration={markings,mark=at position 1 with %
								{\arrow[scale=1.5,>=stealth]{>}}},postaction={decorate}}}
					\node[vertex] (1) at (0, 0) {$2$};
					\node[vertex] (2) at (2, 0) {$1$};
					\node[vertex] (3) at (4, 0) {$-1$};
					\node[vertex] (4) at (6, 0) {$0$};
					\draw[-{Latex[length=1.5mm,width=2mm]}] (1) to (2);
					\draw[-{Latex[length=1.5mm,width=2mm]}] (2) to[bend right=30] (3);
					\draw[-{Latex[length=1.5mm,width=2mm]}] (3) to[bend right=30] (2);
					\draw[-{Latex[length=1.5mm,width=2mm]}] (4) to (3);
				\end{tikzpicture}
			\end{equation}
			\item If $d = 3$, then $2$ is not preperiodic because
			$\phi(-2) = (-2)^3 - (-2)^2 - (-2) - 1 = -11$.
			\item If $d \geq 4$, then $2$ is not preperiodic since $\phi(1) \leq -3$.
		\end{itemize}
		
		\item For $\phi(X) = X^d - X^{d-1} - \cdots - X$, we have
		\begin{equation}
			\phi(2) = 2^d - \frac{2(2^{d-1} - 1)}{2 - 1} = 2.
		\end{equation}
		Hence, $2$ is periodic for this $\phi$. Since we have $\phi(1) = 2 - d$, $|\phi(-2)| = \frac{1}{3} \left| (-2)^{d+2} + 2 \right| \geq 6$, and
		\begin{align}
			\phi(-1) =
			\begin{cases}
				2 & (d \text{ is even}) \\
				-1 & (d \text{ is odd})
			\end{cases},
		\end{align}
		we have the portrait for each $d$:
		\begin{itemize}
			\item If $d = 2$, we have
			\begin{equation}
				\begin{tikzpicture}[scale=.8]
					\tikzset{vertex/.style = {}}
					\tikzset{every loop/.style={min distance=10mm,in=45,out=-45,->}}
					\tikzset{edge/.style={decoration={markings,mark=at position 1 with %
								{\arrow[scale=1.5,>=stealth]{>}}},postaction={decorate}}}
					\node[vertex] (1) at (0, 0) {$1$};
					\node[vertex] (2) at (2, 0) {$0$};
					\node[vertex] (3) at (4, 0) {$-1$};
					\node[vertex] (4) at (6, 0) {$2$};
					\draw[-{Latex[length=1.5mm,width=2mm]}] (1) to (2);
					\draw[-{Latex[length=1.5mm,width=2mm]}] (2) to[out=310, in=50, looseness=7] (2);
					\draw[-{Latex[length=1.5mm,width=2mm]}] (3) to (4);
					\draw[-{Latex[length=1.5mm,width=2mm]}] (4) to[out=310, in=50, looseness=7] (4);
				\end{tikzpicture}
			\end{equation}
			\item If $d = 3$, we have
			\begin{equation}\begin{tikzpicture}[scale=.8]
					\tikzset{vertex/.style = {}}
					\tikzset{every loop/.style={min distance=10mm,in=45,out=-45,->}}
					\tikzset{edge/.style={decoration={markings,mark=at position 1 with %
								{\arrow[scale=1.5,>=stealth]{>}}},postaction={decorate}}}
					\node[vertex] (1) at (0, 0) {$1$};
					\node[vertex] (2) at (2, 0) {$-1$};
					\node[vertex] (3) at (4, 0) {$0$};
					\node[vertex] (4) at (6, 0) {$2$};
					\draw[-{Latex[length=1.5mm,width=2mm]}] (1) to (2);
					\draw[-{Latex[length=1.5mm,width=2mm]}] (2) to[out=310, in=50, looseness=7] (2);
					\draw[-{Latex[length=1.5mm,width=2mm]}] (3) to[out=310, in=50, looseness=7] (3);
					\draw[-{Latex[length=1.5mm,width=2mm]}] (4) to[out=310, in=50, looseness=7] (4);
				\end{tikzpicture}
			\end{equation}
			\item If $d \geq 4$ is even, we have
			\[
			\begin{tikzpicture}[scale=.9]
				\tikzset{vertex/.style = {}}
				\tikzset{every loop/.style={min distance=10mm,in=45,out=-45,->}}
				\tikzset{edge/.style={decoration={markings,mark=at position 1 with %
							{\arrow[scale=1.5,>=stealth]{>}}},postaction={decorate}}}
				\node[vertex] (1) at (0, 0) {$-1$};
				\node[vertex] (2) at (2, 0) {$2$};
				\node[vertex] (3) at (4, 0) {$0$};
				\draw[-{Latex[length=1.5mm,width=2mm]}] (1) to (2);
				\draw[-{Latex[length=1.5mm,width=2mm]}] (2) to[out=310, in=50, looseness=7] (2);
				\draw[-{Latex[length=1.5mm,width=2mm]}] (3) to[out=310, in=50, looseness=7] (3);
			\end{tikzpicture}
			\]
			\item If $d \geq 5$ is odd, we have
			\[
			\begin{tikzpicture}[scale=.9]
				\tikzset{vertex/.style = {}}
				\tikzset{every loop/.style={min distance=10mm,in=45,out=-45,->}}
				\tikzset{edge/.style={decoration={markings,mark=at position 1 with %
							{\arrow[scale=1.5,>=stealth]{>}}},postaction={decorate}}}
				\node[vertex] (1) at (0, 0) {$-1$};
				\node[vertex] (2) at (2, 0) {$0$};
				\node[vertex] (3) at (4, 0) {$2$};
				\draw[-{Latex[length=1.5mm,width=2mm]}] (1) to[out=310, in=50, looseness=7] (1);
				\draw[-{Latex[length=1.5mm,width=2mm]}] (2) to[out=310, in=50, looseness=7] (2);
				\draw[-{Latex[length=1.5mm,width=2mm]}] (3) to[out=310, in=50, looseness=7] (3);
			\end{tikzpicture}
			\]
		\end{itemize}
		
		\item For $\phi(X) = - X^d + X^{d-1} + \cdots + X + 1$, we have
		\begin{equation}
			\phi(2) = -1, \quad
			\phi(-1) = \begin{cases}
				2 & (d \text{ is odd}) \\
				-1 & (d \text{ is even})
			\end{cases}.
		\end{equation}
		Hence, $2$ is preperiodic for this $\phi$. Moreover, we have $0 \mapsto 1$, which can be preperiodic if $d = 2, \, 3$ since $\phi(1) = d-1$. Furthermore, we have
		\begin{align}
			|\phi(-2)| = \frac{1}{3} \left| (-2)^{d+2} - 1 \right| \geq 5 \text{ for } d \geq 2.
		\end{align}
		Hence $-2$ is not preperiodic for each $d \geq 2$. Therefore, we have the portrait for each $d \geq 2$:
		\begin{itemize}
			\item If $d=2$, we have
			\[
			\begin{tikzpicture}[scale=.8]
				\tikzset{vertex/.style = {}}
				\tikzset{every loop/.style={min distance=10mm,in=45,out=-45,->}}
				\tikzset{edge/.style={decoration={markings,mark=at position 1 with %
							{\arrow[scale=1.5,>=stealth]{>}}},postaction={decorate}}}
				\node[vertex] (1) at (0, 0) {$0$};
				\node[vertex] (2) at (2, 0) {$1$};
				\node[vertex] (3) at (4, 0) {$2$};
				\node[vertex] (4) at (6, 0) {$-1$};
				\draw[-{Latex[length=1.5mm,width=2mm]}] (1) to (2);
				\draw[-{Latex[length=1.5mm,width=2mm]}] (2) to[out=310, in=50, looseness=7] (2);
				\draw[-{Latex[length=1.5mm,width=2mm]}] (3) to (4);
				\draw[-{Latex[length=1.5mm,width=2mm]}] (4) to[out=310, in=50, looseness=7] (4);
			\end{tikzpicture}
			\]
			\item If $d=3$, we have
			\begin{equation}
				\begin{tikzpicture}[scale=.8]
					\tikzset{vertex/.style = {}}
					\tikzset{every loop/.style={min distance=10mm,in=45,out=-45,->}}
					\tikzset{edge/.style={decoration={markings,mark=at position 1 with %
								{\arrow[scale=1.5,>=stealth]{>}}},postaction={decorate}}}
					\node[vertex] (1) at (0, 0) {$0$};
					\node[vertex] (2) at (2, 0) {$1$};
					\node[vertex] (3) at (4, 0) {$2$};
					\node[vertex] (4) at (6, 0) {$-1$};
					\draw[-{Latex[length=1.5mm,width=2mm]}] (1) to (2);
					\draw[-{Latex[length=1.5mm,width=2mm]}] (2) to (3);
					\draw[-{Latex[length=1.5mm,width=2mm]}] (3) to[bend right=30] (4);
					\draw[-{Latex[length=1.5mm,width=2mm]}] (4) to[bend right=30] (3);
				\end{tikzpicture}
			\end{equation}
			\item If $d \geq 4$ is even, we have
			\[
			\begin{tikzpicture}[scale=.9]
				\tikzset{vertex/.style = {}}
				\tikzset{every loop/.style={min distance=10mm,in=45,out=-45,->}}
				\tikzset{edge/.style={decoration={markings,mark=at position 1 with %
							{\arrow[scale=1.5,>=stealth]{>}}},postaction={decorate}}}
				\node[vertex] (1) at (0, 0) {$2$};
				\node[vertex] (2) at (2, 0) {$-1$};
				\draw[-{Latex[length=1.5mm,width=2mm]}] (1) to (2);
				\draw[-{Latex[length=1.5mm,width=2mm]}] (2) to[out=310, in=50, looseness=7] (2);
			\end{tikzpicture}
			\]
			\item If $d \geq 5$ is odd, we have
			\[
			\begin{tikzpicture}[scale=.9]
				\tikzset{vertex/.style = {}}
				\tikzset{every loop/.style={min distance=10mm,in=45,out=-45,->}}
				\tikzset{edge/.style={decoration={markings,mark=at position 1 with %
							{\arrow[scale=1.5,>=stealth]{>}}},postaction={decorate}}}
				\node[vertex] (1) at (0, 0) {$2$};
				\node[vertex] (2) at (2, 0) {$-1$};
				\draw[-{Latex[length=1.5mm,width=2mm]}] (1) to[bend right=30] (2);
				\draw[-{Latex[length=1.5mm,width=2mm]}] (2) to[bend right=30] (1);
			\end{tikzpicture}
			\]
		\end{itemize}
		
		\item For $\phi(X) = - X^d + X^{d-1} + \cdots + X$, we have $\phi(2) = -2$, and
		\begin{align}
			|\phi(-2)| = \left| -(-2)^d + \frac{-2 \big( 1 - (-2)^{d-1} \big)}{3} \right| = \frac{1}{3} \big| (-2)^{d+2} + 2 \big| \geq 6
		\end{align}
		for each $d \geq 2$. Hence, $2$ is not preperiodic.
	\end{enumerate}
	
	\subsection{When is $-2$ preperiodic}
	
	Let $\phi(X) = a_d X^d + \cdots + a_0$ be a polynomial map of height $0$ for which $-2$ is preperiodic. Define $\psi$ to make the following diagram commutative:
	\begin{equation}
		\begin{tikzcd}
			\Q \arrow[r, "\psi"] \arrow[d, "-1"'] & \Q \\
			\Q \arrow[r, "\phi"'] & \Q \arrow[u, "-1"']
		\end{tikzcd}
	\end{equation}
	Then, $\psi$ is a polynomial of height $0$, and $2$ is preperiodic under $\psi$. The portrait of $\psi$ is isomorphic to that of $\phi$.
	
	From the above, we have completely determined the portraits for the cases where $2$ or $-2$ is preperiodic.
	
	\subsection{When neither $2$ nor $-2$ is preperiodic}
	
	Finally, we determine the portraits for the cases where neither $2$ nor $-2$ is preperiodic. First, suppose there is exactly one preperiodic point. In this case, the portrait is given as follows:
	\begin{center}
		\begin{tikzpicture}[scale=.9]
			\tikzset{vertex/.style = {}}
			\tikzset{every loop/.style={min distance=10mm,in=45,out=-45,->}}
			\tikzset{edge/.style={decoration={markings,mark=at position 1 with %
						{\arrow[scale=1.5,>=stealth]{>}}},postaction={decorate}}}
			\node[vertex] (1) at (0, 0) {$\bullet$};
			\draw[-{Latex[length=1.5mm,width=2mm]}] (1) to[out=310, in=50, looseness=7] (1);
		\end{tikzpicture}
	\end{center}
	This portrait is realized, for example, by $\phi(X) = X^3 + X$, since $0 \mapsto 0, \, 1 \mapsto 2 \mapsto 10, \, -1 \mapsto -2 \mapsto -10$.
	
	Next, suppose there are exactly two preperiodic points. In this case, the portrait is one of the following three types:
	\begin{align}
		\label{2-1}
		\begin{tikzpicture}[scale=.9,baseline={([yshift=-0.8ex]current bounding box.center)}]
			\tikzset{vertex/.style = {}}
			\tikzset{every loop/.style={min distance=10mm,in=45,out=-45,->}}
			\tikzset{edge/.style={decoration={markings,mark=at position 1 with %
						{\arrow[scale=1.5,>=stealth]{>}}},postaction={decorate}}}
			\node[vertex] (1) at (0, 0) {$\bullet$};
			\node[vertex] (2) at (2, 0) {$\bullet$};
			\draw[-{Latex[length=1.5mm,width=2mm]}] (1) to[out=310, in=50, looseness=7] (1);
			\draw[-{Latex[length=1.5mm,width=2mm]}] (2) to[out=310, in=50, looseness=7] (2);
		\end{tikzpicture} \\
		\label{2-2}
		\begin{tikzpicture}[scale=.9,baseline={([yshift=-0.8ex]current bounding box.center)}]
			\tikzset{vertex/.style = {}}
			\tikzset{every loop/.style={min distance=10mm,in=45,out=-45,->}}
			\tikzset{edge/.style={decoration={markings,mark=at position 1 with %
						{\arrow[scale=1.5,>=stealth]{>}}},postaction={decorate}}}
			\node[vertex] (1) at (0, 0) {$\bullet$};
			\node[vertex] (2) at (2, 0) {$\bullet$};
			\draw[-{Latex[length=1.5mm,width=2mm]}] (1) to (2);
			\draw[-{Latex[length=1.5mm,width=2mm]}] (2) to[out=310, in=50, looseness=7] (2);
		\end{tikzpicture} \\
		\label{2-3}
		\begin{tikzpicture}[scale=.9,baseline={([yshift=-0.8ex]current bounding box.center)}]
			\tikzset{vertex/.style = {}}
			\tikzset{every loop/.style={min distance=10mm,in=45,out=-45,->}}
			\tikzset{edge/.style={decoration={markings,mark=at position 1 with %
						{\arrow[scale=1.5,>=stealth]{>}}},postaction={decorate}}}
			\node[vertex] (1) at (0, 0) {$\bullet$};
			\node[vertex] (2) at (2, 0) {$\bullet$};
			\draw[-{Latex[length=1.5mm,width=2mm]}] (1) to[bend right=30] (2);
			\draw[-{Latex[length=1.5mm,width=2mm]}] (2) to[bend right=30] (1);
		\end{tikzpicture}
	\end{align}
	The first type \eqref{2-1} is realized, for example, by $\phi(X) = X^3 + X^2 + X$. The second type \eqref{2-2} is realized, for example, by $\phi(X) = X^2 + X$. The third type \eqref{2-3} is realized, for example, by $\phi(X) = -X^3 + 1$.
	
	
	Finally, suppose there are exactly three preperiodic points.
	First, suppose there are exactly three fixed points. In this case, the portrait is given as follows:
	\begin{center}
		\begin{tikzpicture}[scale=.9]
			\tikzset{vertex/.style = {}}
			\tikzset{every loop/.style={min distance=10mm,in=45,out=-45,->}}
			\tikzset{edge/.style={decoration={markings,mark=at position 1 with %
						{\arrow[scale=1.5,>=stealth]{>}}},postaction={decorate}}}
			\node[vertex] (1) at (0, 0) {$\bullet$};
			\node[vertex] (2) at (2, 0) {$\bullet$};
			\node[vertex] (3) at (4, 0) {$\bullet$};
			\draw[-{Latex[length=1.5mm,width=2mm]}] (1) to[out=310, in=50, looseness=7] (1);
			\draw[-{Latex[length=1.5mm,width=2mm]}] (2) to[out=310, in=50, looseness=7] (2);
			\draw[-{Latex[length=1.5mm,width=2mm]}] (3) to[out=310, in=50, looseness=7] (3);
		\end{tikzpicture}
	\end{center}
	This portrait is realized, for example, by $\phi(X) = X^3$.
	
	Next, suppose there are exactly two fixed points. In this case, the portrait is:
	\begin{center}
		\begin{tikzpicture}[scale=.9]
			\tikzset{vertex/.style = {}}
			\tikzset{every loop/.style={min distance=10mm,in=45,out=-45,->}}
			\tikzset{edge/.style={decoration={markings,mark=at position 1 with %
						{\arrow[scale=1.5,>=stealth]{>}}},postaction={decorate}}}
			\node[vertex] (1) at (0, 0) {$\bullet$};
			\node[vertex] (2) at (2, 0) {$\bullet$};
			\node[vertex] (3) at (4, 0) {$\bullet$};
			\draw[-{Latex[length=1.5mm,width=2mm]}] (1) to (2);
			\draw[-{Latex[length=1.5mm,width=2mm]}] (2) to[out=310, in=50, looseness=7] (2);
			\draw[-{Latex[length=1.5mm,width=2mm]}] (3) to[out=310, in=50, looseness=7] (3);
		\end{tikzpicture}
	\end{center}
	This portrait is realized, for example, by $\phi(X) = X^2$.
	
	Next, suppose there is exactly one fixed point. In this case, the portrait is:
	\begin{align}
		\label{3-1-1}
		\begin{tikzpicture}[scale=.9,baseline={([yshift=-0.8ex]current bounding box.center)}]
			\tikzset{vertex/.style = {}}
			\tikzset{every loop/.style={min distance=10mm,in=45,out=-45,->}}
			\tikzset{edge/.style={decoration={markings,mark=at position 1 with %
						{\arrow[scale=1.5,>=stealth]{>}}},postaction={decorate}}}
			\node[vertex] (1) at (0, 0) {$\bullet$};
			\node[vertex] (2) at (2, 0) {$\bullet$};
			\node[vertex] (3) at (4, 0) {$\bullet$};
			\draw[-{Latex[length=1.5mm,width=2mm]}] (1) to (2);
			\draw[-{Latex[length=1.5mm,width=2mm]}] (2) to (3);
			\draw[-{Latex[length=1.5mm,width=2mm]}] (3) to[out=310, in=50, looseness=7] (3);
		\end{tikzpicture} \\
		\label{3-1-2}
		\begin{tikzpicture}[scale=.9,baseline={([yshift=-0.8ex]current bounding box.center)}]
			\tikzset{vertex/.style = {}}
			\tikzset{every loop/.style={min distance=10mm,in=45,out=-45,->}}
			\tikzset{edge/.style={decoration={markings,mark=at position 1 with %
						{\arrow[scale=1.5,>=stealth]{>}}},postaction={decorate}}}
			\node[vertex] (1) at (0, 0) {$\bullet$};
			\node[vertex] (2) at (2, 0) {$\bullet$};
			\node[vertex] (3) at (4, 0) {$\bullet$};
			\draw[-{Latex[length=1.5mm,width=2mm]}] (1) to[out=310, in=50, looseness=7] (1);
			\draw[-{Latex[length=1.5mm,width=2mm]}] (2) to[bend right=30] (3);
			\draw[-{Latex[length=1.5mm,width=2mm]}] (3) to[bend right=30] (2);
		\end{tikzpicture} \\
		\label{3-1-3}
		\begin{tikzpicture}[scale=.9,baseline={([yshift=-0.8ex]current bounding box.center)}]
			\tikzset{vertex/.style = {}}
			\tikzset{every loop/.style={min distance=10mm,in=45,out=-45,->}}
			\tikzset{edge/.style={decoration={markings,mark=at position 1 with %
						{\arrow[scale=1.5,>=stealth]{>}}},postaction={decorate}}}
			\node[vertex] (1) at (2, 0) {$\bullet$};
			\node[vertex] (1-1a) at (.166, .5) {$\bullet$};
			\node[vertex] (1-1b) at (.166, -.5) {$\bullet$};
			\draw[-{Latex[length=1.5mm,width=2mm]}] (1) to[out=310, in=50, looseness=7] (1);
			\draw[-{Latex[length=1.5mm,width=2mm]}] (1-1a) to (1);
			\draw[-{Latex[length=1.5mm,width=2mm]}] (1-1b) to (1);
		\end{tikzpicture}
	\end{align}
	The first type \eqref{3-1-1} is realized, for example, by $\phi(X) = X^4 + X^2 - 1$.
	The second type \eqref{3-1-2} is realized, for example, by $\phi(X) = -X^3$.
	The third type \eqref{3-1-3} is realized, for example, by $\phi(X) = X^3 - X$.
	
	Finally, suppose there are no fixed points. In this case, the portrait is one of the following two types:
	\begin{center}
		\begin{tikzpicture}[scale=.9,baseline={([yshift=-0.8ex]current bounding box.center)}]
			\tikzset{vertex/.style = {}}
			\tikzset{every loop/.style={min distance=10mm,in=45,out=-45,->}}
			\tikzset{edge/.style={decoration={markings,mark=at position 1 with %
						{\arrow[scale=1.5,>=stealth]{>}}},postaction={decorate}}}
			\node[vertex] (1) at (0, 0) {$\bullet$};
			\node[vertex] (2) at (2, 0) {$\bullet$};
			\node[vertex] (3) at (4, 0) {$\bullet$};
			\draw[-{Latex[length=1.5mm,width=2mm]}] (1) to (2);
			\draw[-{Latex[length=1.5mm,width=2mm]}] (2) to (3);
			\draw[-{Latex[length=1.5mm,width=2mm]}] (3) to[bend right=40] (1);
		\end{tikzpicture}
		\quad or \quad
		\begin{tikzpicture}[scale=.9,baseline={([yshift=-0.8ex]current bounding box.center)}]
			\tikzset{vertex/.style = {}}
			\tikzset{every loop/.style={min distance=10mm,in=45,out=-45,->}}
			\tikzset{edge/.style={decoration={markings,mark=at position 1 with %
						{\arrow[scale=1.5,>=stealth]{>}}},postaction={decorate}}}
			\node[vertex] (1) at (0, 0) {$\bullet$};
			\node[vertex] (2) at (2, 0) {$\bullet$};
			\node[vertex] (3) at (4, 0) {$\bullet$};
			\draw[-{Latex[length=1.5mm,width=2mm]}] (1) to (2);
			\draw[-{Latex[length=1.5mm,width=2mm]}] (2) to[bend right=30] (3);
			\draw[-{Latex[length=1.5mm,width=2mm]}] (3) to[bend right=30] (2);
		\end{tikzpicture}
	\end{center}
	The former type does not exist by Lemma 5.1. The latter type is realized, for example, by $\phi(X) = X^2 - 1$.
	
	\printbibliography

@misc{yap,
	title={$S$-Integral Points in Orbits on $\mathbb{P}^1$}, 
	author={Jit Wu Yap},
	year={2023},
	eprint={2312.05094},
	archivePrefix={arXiv},
	primaryClass={math.NT},
	url={https://arxiv.org/abs/2312.05094}, 
}

@book{
	sil,
	title={The arithmetic of dynamical systems},
	author={Silverman, Joseph H.},
	volume={241},
	year={2007},
	publisher={Springer Science \& Business Media}
}

@phdthesis{zieve,
	author={Zieve, Michael},
	title={Cycles of polynomial mappings},
	school={University of California, Berkeley},
	year={1996}
}

@article {hutz,
	AUTHOR = {Hutz, Benjamin},
	TITLE = {Good reduction and canonical heights of subvarieties},
	JOURNAL = {Math. Res. Lett.},
	FJOURNAL = {Mathematical Research Letters},
	VOLUME = {25},
	YEAR = {2018},
	NUMBER = {6},
	PAGES = {1837--1863},
	ISSN = {1073-2780,1945-001X},
	MRCLASS = {37P35 (14G40)},
	MRNUMBER = {3934847},
	MRREVIEWER = {Lukas\ Pottmeyer},
	DOI = {10.4310/MRL.2018.v25.n6.a7},
	URL = {https://doi.org/10.4310/MRL.2018.v25.n6.a7},
}

@article {ingram,
	AUTHOR = {Ingram, Patrick},
	TITLE = {Explicit canonical heights for divisors relative to
	endomorphisms of {$\Bbb P^N$}},
	JOURNAL = {Matematica},
	FJOURNAL = {La Matematica. Official Journal of the Association for Women
	in Mathematics},
	VOLUME = {3},
	YEAR = {2024},
	NUMBER = {4},
	PAGES = {1456--1485},
	ISSN = {2730-9657},
	MRCLASS = {11G50 (14G40 37P05 37P30)},
	MRNUMBER = {4842147},
	MRREVIEWER = {Thomas\ Ward},
	DOI = {10.1007/s44007-024-00100-6},
	URL = {https://doi.org/10.1007/s44007-024-00100-6},
}
\end{document}